\documentclass[pdflatex,sn-mathphys-num]{sn-jnl}


\usepackage{graphicx}%
\usepackage{multirow}%
\usepackage{amsmath,amssymb,amsfonts}%
\usepackage{amsthm}%
\usepackage{mathrsfs}%
\usepackage[title]{appendix}%
\usepackage{xcolor}%
\usepackage{textcomp}%
\usepackage{manyfoot}%
\usepackage{booktabs}%
\usepackage{algorithm}%
\usepackage{algorithmicx}%
\usepackage{algpseudocode}%
\usepackage{listings}%

\usepackage{mathrsfs} 
\usepackage{enumitem,lineno}
\usepackage{graphicx}

\usepackage[T1]{fontenc}
\usepackage{amsfonts,amsmath,amssymb}
\usepackage{tikz,comment}
\usetikzlibrary{positioning, calc, trees, decorations.markings, patterns, arrows, arrows.meta}
\usepackage{diagmac2}

\newtheorem{theorem}{Theorem}
\newtheorem{proposition}{Proposition}
\newtheorem{lemma}{Lemma}%
\newtheorem{corollary}{Corollary}%
\newtheorem{definition}{Definition}%

\theoremstyle{thmstyletwo}%
\newtheorem{example}{Example}%
\newtheorem{remark}{Remark}%

\raggedbottom

\begin{document}

\title[Labeled Trees Generating Separable and Locally Finite Ultrametrics]{Labeled Trees Generating Separable and Locally Finite Ultrametrics}


\author*[1,2]{\fnm{Oleksiy} \sur{Dovgoshey}}\email{oleksiy.dovgoshey@gmail.com,oleksiy.dovgoshey@utu.fi}

\author[3]{\fnm{Olga} \sur{Rovenska}}\email{rovenskaya.olga.math@gmail.com}

\affil*[1]{\orgdiv{Department of Theory of Functions}, \orgname{Institute of Applied Mathematics and Mechanics of NASU}, \orgaddress{\city{Slovyansk}, \country{Ukraine}}}

\affil[2]{\orgdiv{Department of Mathematics and Statistics}, \orgname{University of Turku}, \orgaddress{\city{Turku}, \country{Finland}}}

\affil[3]{\orgdiv{Department of Mathematics and Modeling}, \orgname{Donbas State Engineering Academy}, \orgaddress{\city{Kramatorsk},  \country{Ukraine}}}


\abstract{We analyze the interplay between labeled trees and the ultrametric spaces they present. We provide characterizations of labeled trees that generate separable ultrametric spaces and those that generate locally finite ultrametric spaces. In particular, we establish an analog of K\"onig’s Infinity Lemma for locally finite ultrametric spaces generated by labeled trees.}

\keywords{Labeled tree, separability, local finiteness, ultrametric space}


\pacs[MSC Classification]{Primary 05C63, 05C05, Secondary 54E35}

\maketitle

\section{Introduction}

According to \cite{GV}, any finite ultrametric space is isometrically describable by a Gurvich--Vyalyi representing tree.  The
Gurvich--Vyalyi representing trees form a subclass of finite trees equipped with a specific labeling of the vertex set.  Trees with labeled vertices have been extensively studied, resulting in numerous contributions (see, e.g., the survey in \cite{Gallian2024}). The corresponding geometric interpretation of the Gurvich--Vyalyi representation \cite{PD2014JMS} provides a framework for addressing various extremal problems related to finite ultrametric spaces \cite{DP2020pNUAA,DPT2016,DPT2017FPTA}.
Analogues of the Gurvich--Vyalyi representation and its geometric interpretation have recently been extended to the class of totally bounded ultrametric spaces \cite{Dovg2025}.

Infinite trees with positive real-valued edge labelings are commonly referred to as \(R\)-trees (see~\cite{Ber2019SMJ} for relevant results concerning \(R\)-trees and ultrametrics). The structural relationships between finite subtrees of \(R\)-trees and finite monotone rooted trees are described in~\cite{Dov2020TaAoG}. The categorical equivalence between trees and ultrametric spaces has been studied in \cite{H04,Lem2003AU}.

Ultrametric spaces generated by arbitrary nonnegative vertex labelings on finite and infinite trees were introduced in \cite{Dov2020TaAoG} and subsequently studied in \cite{DK2024DLPSSAG,DK2022LTGCCaDUS}.  
A characterization of totally bounded ultrametric spaces generated by labeled almost rays was provided in \cite{DV2025JMS}. Furthermore, \cite{DR2025USGbLSG} contains a metric characterization of ultrametric spaces generated by labeled star graphs.

Our work continues the aforementioned studies on the characterization of labeled trees generating ultrametric spaces, with a focus on the topological properties of these spaces.
The aim is to describe the class of trees that admit labelings which generate separable ultrametric spaces. Additionally, we establish a characterization of such trees in terms of the local finiteness of the corresponding ultrametrics.

The structure of the paper is as follows. Section~\ref{sec2} introduces and reminds the necessary definitions and results. 

Theorem~\ref{ggewj77} in Section~\ref{sec3} completely describes the structure of free trees which admit labelings generating separable ultrametric spaces. The labeled star graphs generating separable ultrametric are characterized in Corollary~\ref{dfuj247-1} of Section~\ref{sec3}.

Section~\ref{sec4} is devoted to the characterization of trees that generate locally finite ultrametric spaces. The necessary and sufficient conditions under which labeled rays and labeled star graphs generate locally finite ultrametric spaces are given in Proposition~\ref{52dcij}
and, respectively, in Proposition~\ref{dfuj247} of Section~\ref{sec4}.
Theorem~\ref{dster} provides an analog of K\"onig’s Infinity Lemma for the locally finite case. The main result of this section, Theorem~\ref{trees11}, describes the trees that admit labelings 
generating locally finite ultrametric spaces.

\section{Preliminaries}\label{sec2}

Throughout this text, we denote by \(\mathbb{R}^+\) the half-open interval \([0, \infty)\), by \(\mathbb{N}\) the set of all positive integers, and by $\aleph_0$ the cardinality of the set $\mathbb N$. In addition, we will write $(s_n)_{n \in \mathbb{N}}\subseteq A$ if $(s_n)_{n \in \mathbb{N}}$
is a sequence whose points are elements of the set $A$.

A \textit{metric} on a set $X$ is a function $d\colon X\times X\rightarrow \mathbb R^+$ such that for all \(x\), \(y\), \(z \in X\)
\begin{enumerate}[label=\textit{(\roman*)}, left=0pt]
\item $d(x,y)=d(y,x)$,
\item $(d(x,y)=0)\iff (x=y)$,
\item \(d(x,y)\leq d(x,z) + d(z,y)\).
\end{enumerate}

A metric space \((X, d)\) is called an \emph{ultrametric space} if the \emph{strong triangle inequality}
\[
d(x,y)\leq \max \{d(x,z),d(z,y)\}
\]
holds for all \(x\), \(y\), \(z \in X\). In this case, the function \(d\) is called \emph{an ultrametric} on \(X\).

Let \((X, d)\) be a metric space. A \emph{closed ball} with a \emph{radius} \(r > 0\) and a \emph{center} \(c \in X\) is the set
\[
B_r(c) = \{x \in X \colon d(c, x) \leq r\}.
\]


A set $A \subseteq X$ is said to be {\it bounded} if there exists $B_r(c) \subseteq X$ such that $A \subseteq B_r(c).$

We say that a set $S \subseteq X$ is a \emph{discrete subset} of $(X, d)$ if for every $s \in S$ there is $r > 0$ such that
\begin{equation}
    B_r(s) \cap S = \{s\}.
\end{equation}

A sequence \((x_n)_{n \in \mathbb{N}}\) of points in a metric space \((X, d)\) is said to be {\it convergent} to a point \(a \in X\),
\begin{equation*}
\lim_{n \to \infty} x_n = a,
\end{equation*}
if and only if 
\begin{equation*}
\lim_{n \to \infty} d(x_n, a) = 0.
\end{equation*}

Let \((X, d)\) be a metric space, and let \(S \subseteq X\). The set \(S\) is said to be \emph{dense} in \((X, d)\) if for every \(a \in X\) there is a sequence \((s_n)_{n\in \mathbb{N}} \subseteq S\) such that
\[
a = \lim_{n \to \infty} s_n.
\]

 A metric space $(X, d)$ is  {\it separable} if and only if there exists a set $A \subseteq X$ 
such that $A$ is dense in $(X, d)$ and $|A| \leq \aleph_0$ holds.

We will use the following auxiliary statement.

\begin{lemma} \label{satur}
Let $ (X, d)$ be a separable metric space and let $ A \subseteq X $ be nonempty. Then the metric space $(A, d|_{A \times A})$ is also separable.
\end{lemma}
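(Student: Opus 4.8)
The plan is to exploit the fact that separability for metric spaces is equivalent to second countability, and that second countability is hereditary, but since the paper develops everything from the sequential/dense-subset definition, I would argue directly with countable dense sets and balls. First I would take a countable set $D \subseteq X$ that is dense in $(X,d)$, say $D = \{d_n : n \in \mathbb{N}\}$ (the finite case being trivial), and fix an enumeration. The goal is to manufacture from $D$ a countable subset of $A$ that is dense in $(A, d|_{A\times A})$.

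The key construction goes through a countable family of balls. For each pair $(n,k) \in \mathbb{N}\times\mathbb{N}$ consider the ball $B_{1/k}(d_n)$; whenever this ball meets $A$, pick one point $a_{n,k} \in B_{1/k}(d_n) \cap A$ (this uses the axiom of choice over a countable index set, which is unproblematic). Let $A_0$ be the collection of all such chosen points $a_{n,k}$. Then $A_0 \subseteq A$ and $|A_0| \le \aleph_0$. The main step is to check that $A_0$ is dense in $(A, d|_{A\times A})$: given $a \in A$ and $\varepsilon > 0$, pick $k$ with $2/k < \varepsilon$; by density of $D$ in $X$ there is $d_n$ with $d(a, d_n) < 1/k$, so $a \in B_{1/k}(d_n)\cap A$, hence this intersection is nonempty and the chosen point $a_{n,k}$ satisfies $d(a_{n,k}, d_n) \le 1/k$, whence by the triangle inequality $d(a, a_{n,k}) \le d(a, d_n) + d(d_n, a_{n,k}) < 2/k < \varepsilon$. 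Letting $\varepsilon$ run through $1/m$, $m \in \mathbb{N}$, and choosing such an $a_{n,k}$ for each $m$ produces a sequence in $A_0$ converging to $a$, which is exactly what the definition of dense requires.

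I do not expect a serious obstacle here; the proof is essentially bookkeeping. The one point that deserves a little care is making sure the argument is phrased in terms of the sequential definition of "dense" used in the paper (produce an explicit sequence $(s_m)_{m\in\mathbb{N}} \subseteq A_0$ with $s_m \to a$) rather than the more usual "every open ball around $a$ meets $A_0$" formulation, but these are immediately interchangeable via the $1/m$ trick above. A secondary minor point is the degenerate situation where $X$ (hence $A$) is finite or $D$ is finite, where one can simply take $A_0 = A$; it is cleanest to note at the outset that a finite metric space is trivially separable and otherwise assume $D$ is countably infinite. With the countable dense set $A_0 \subseteq A$ in hand and $|A_0| \le \aleph_0$, the space $(A, d|_{A\times A})$ satisfies the definition of separability, completing the proof.
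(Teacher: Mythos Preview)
Your argument is correct and is the standard direct proof that separability is hereditary for metric spaces: from a countable dense $D\subseteq X$ you build the countable family of balls $\{B_{1/k}(d_n)\}_{n,k}$, select one representative $a_{n,k}$ in each ball that meets $A$, and verify via the triangle inequality that the resulting countable $A_0\subseteq A$ is dense in $A$. The handling of the sequential formulation of density and of the trivial finite case is fine.

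By way of comparison, the paper does not supply its own proof of this lemma at all; it simply cites an external reference (Corollary~8.11 in \cite{Surinder}). So your write-up is strictly more than what the paper provides. If anything, your version has the advantage of being self-contained and phrased in the same sequential language the paper uses for density, whereas the paper treats the result as a known fact to be quoted. There is nothing to correct; you could, if you wished, shorten the exposition by omitting the explicit discussion of the finite/degenerate case, since a finite $X$ is immediately separable and the main construction already covers it.
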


See, for example, \cite[p. 216]{Surinder}, Corollary 8.11.

\begin{definition}\label{7jgreoo}
A metric space $(X,d)$ is called \textit{locally finite} if every bounded $A\subseteq X$ is finite.
\end{definition}

\begin{remark}
Definition \ref{7jgreoo} is used, for example, in \cite{Capraro}, \cite{Ost-Ost}, \cite{Ost}, but is not generally accepted.
In Encyclopedia of Distances \cite{Dez-Dez} and many other works, the locally finite spaces $(X,d)$  are required to satisfy Definition~\ref{7jgreoo} and the following condition: there exists $r > 0$ such that the ball $B_r(x)$ contains exactly one point for each $x \in X$.
\end{remark}

The next statement follows directly from Definition~\ref{7jgreoo}.

\begin{lemma}\label{ettpaz86}
Let $(X,d)$ be a locally finite metric space and let $A $ be a subset of $X$. Then the metric space $(A, d|_{A \times A})$ is locally finite.
\end{lemma}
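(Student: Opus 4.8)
The plan is simply to unwind the two definitions involved. Let $B \subseteq A$ be an arbitrary bounded subset of the metric space $(A, d|_{A \times A})$; the goal is to show that $B$ is finite. If $B = \varnothing$ there is nothing to prove, so assume $B \neq \varnothing$. By the definition of boundedness, there exist a center $c \in A$ and a radius $r > 0$ such that
\[
B \subseteq \{x \in A \colon d|_{A \times A}(c,x) \leq r\}.
\]

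Next I would observe that, since $d|_{A \times A}(c,x) = d(c,x)$ for every $x \in A$, the set on the right-hand side equals $B_r(c) \cap A$, where $B_r(c) = \{x \in X \colon d(c,x) \leq r\}$ is the closed ball of $(X, d)$ with center $c \in A \subseteq X$ and radius $r$. In particular, $B \subseteq B_r(c)$, so $B$ is a bounded subset of the ambient space $(X,d)$. Because $(X,d)$ is locally finite, Definition~\ref{7jgreoo} forces $B$ to be finite. As $B$ was an arbitrary bounded subset of $(A, d|_{A \times A})$, we conclude that $(A, d|_{A \times A})$ is locally finite.

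There is no genuine obstacle here: the only point meriting a word of justification is that a closed ball of the subspace $(A, d|_{A\times A})$ is contained in the closed ball of $(X,d)$ with the same center and radius, which is immediate from the fact that the subspace metric is a restriction of $d$. In short, the lemma is essentially the remark that boundedness passes upward from a subspace to the whole space, after which local finiteness of the ambient space does the rest. This is exactly why the statement can be said to follow directly from Definition~\ref{7jgreoo}.
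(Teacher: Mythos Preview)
Your proof is correct and follows exactly the same approach as the paper: the paper's one-line argument simply observes that every set bounded in $(A, d|_{A\times A})$ is also bounded in $(X,d)$, and you have written out the details of precisely that observation.
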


\begin{proof}

It suffices to note that each bounded in $ (A, d|_{A \times A})$ set is also  bounded in $ (X, d). $
\end{proof}

Further, let us recall some definitions and facts from graph theory.

A \textit{graph} is a pair $(V, E)$ consisting of a set $V$ and a set $E$ whose elements are unordered pairs $\{u, v\}$ of different points $u, v \in V$. For a graph $G = (V, E)$, the sets $V = V(G)$ and $E = E(G)$ are called \textit{the set of vertices} and \textit{the set of edges}, respectively. A graph $G$ is \textit{finite} if $V(G)$ is a finite set. If $\{x, y\} \in E(G)$, then the vertices $x$ and $y$ are called \textit{adjacent}. In what follows, we will always assume that $E(G) \cap V(G) = \emptyset$.

Let $G$ be a graph.
A graph \(G_1\) is a \emph{subgraph} of \(G\) if
\[
V(G_1) \subseteq V(G) \quad \text{and} \quad E(G_1) \subseteq E(G).
\]
In this case we will write \(G_1 \subseteq G\). 

If \(\{G_i \colon i \in I\}\) is a family of subgraphs of a graph \(G\), then, by definition, the union \(\bigcup_{i \in I} G_i\) is a subgraph \(G^{*}\) of \(G\) such that 
\[
V(G^{*}) = \bigcup_{i \in I} V(G_i) \quad \text{and} \quad E(G^{*}) = \bigcup_{i \in I} E(G_i).
\]
Similarly, the intersection \(\bigcap_{i \in I} G_i\) is a subgraph \(G_{*}\) of \(G\) with
\begin{equation*}
V(G_{*}) = \bigcap_{i \in I} V(G_i) \quad \text{and} \quad E(G_{*}) = \bigcap_{i \in I} E(G_i).
\end{equation*}

Let \(v\) be a vertex of a graph \(G\), and let $N(v)$ be the set of vertices of $G$ adjacent to $v$,
\begin{equation}\label{edtg}
N(v) = \{ u \in V(G) \colon \{v, u\} \in E(G) \}.
\end{equation}
The {\it degree} of the vertex $v$ is, by definition, the cardinality of the set $N(v)$.  
The degree of $v$ will be denoted as $\delta_G(v)$. Thus, we have
\begin{equation*}
\delta_G(v) = |N(v)|.
\end{equation*}

A \emph{path} is a finite graph \(P\) whose vertices can be numbered without repetitions so that
\begin{equation}\label{e3.3-1}
V(P) = \{x_1, \ldots, x_k\} \quad \text{and} \quad E(P) = \{\{x_1, x_2\}, \ldots, \{x_{k-1}, x_k\}\}
\end{equation}
with \(k \geqslant 2\). We will write \(P = (x_1, \ldots, x_k)\) or \(P = P_{x_1, x_k}\) if \(P\) is a path satisfying \eqref{e3.3-1} and said that \(P\) is a \emph{path joining \(x_1\) and \(x_k\)}. A graph \(G\) is \emph{connected} if for every two distinct vertices of \(G\) there is a path \(P \subseteq G\) joining these vertices.

A finite graph $C$ is a \textit{cycle} if there exists an enumeration of its vertices without repetition such that $V(C) = \{x_1, \ldots, x_n\}$ and
\begin{equation*}
E(C) = \{\{x_1, x_2\}, \ldots, \{x_{n-1}, x_n\}, \{x_n, x_1\}\} \quad \text{with } n \geq 3.
\end{equation*}

\begin{definition}
A connected graph $T$ with $V(T) \neq \emptyset$ and without cycles is called a \textit{tree}.
\end{definition}

We shall say that a tree \(T\) is a \emph{star graph} if there is a vertex \(c \in V(T)\), the \emph{center} of \(T\), such that \(c\) and \(v\) are adjacent for every \(v \in V(T) \setminus \{c\}\).

An infinite graph \(R\) of the form 
\begin{equation}
\label{santr1}
V(R) = \{x_1, x_2, \ldots, x_n, x_{n+1}, \ldots\}, \quad E(R) = \{\{x_1, x_2\}, \ldots, \{x_n, x_{n+1}\}, \ldots\},
\end{equation}
where all \(x_n\) are assumed to be distinct, is called a \emph{ray}. If \eqref{santr1} holds then we will write $R=(x_1,x_2,\ldots,x_n,\ldots)$. It is clear that every ray is a tree. A graph is \emph{rayless} if it contains no rays.

The next proposition is a modification of König's Infinity Lemma \cite{Konig}.

\begin{proposition}\label{p3.2}
Every infinite connected graph has a vertex of infinite degree or contains a ray.
\end{proposition}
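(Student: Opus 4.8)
The plan is to split off the trivial case and reduce everything to the locally finite situation. If $G$ has a vertex of infinite degree, the conclusion holds at once, so I would assume from now on that $\delta_G(v)<\infty$ for every $v\in V(G)$ and construct a ray $R\subseteq G$. The ray will be built greedily, one vertex at a time, while maintaining the invariant that, after $x_1,\dots,x_n$ have been chosen, the connected component $C_n$ of the graph $G-\{x_1,\dots,x_{n-1}\}$ containing $x_n$ is infinite.

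The engine of the argument is one elementary observation: if $H$ is an infinite connected graph in which every vertex has finite degree and $v\in V(H)$, then $H-v$ has an infinite component. Indeed, $V(H)\setminus\{v\}$ is infinite; if every component of $H-v$ were finite there would be infinitely many of them, and since $H$ is connected each such component must contain a vertex adjacent to $v$, forcing $\delta_H(v)=\infty$, a contradiction. Note also that degrees can only decrease when one passes to a subgraph, since $N_{C_n}(u)\subseteq N_G(u)$; hence every graph arising below is again infinite, connected, and has all vertices of finite degree, so the observation applies repeatedly.

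Now the construction. Pick $x_1\in V(G)$ arbitrarily. By the observation, $G-x_1$ has an infinite component $C_2$, and by connectedness of $G$ some vertex $x_2\in V(C_2)$ is adjacent to $x_1$; this gives the invariant for $n=2$. Inductively, suppose $x_1,\dots,x_n$ have been chosen with $C_n$ infinite. Applying the observation to $C_n$, the graph $C_n-x_n$ has an infinite component $C_{n+1}$, and since $C_n$ is connected some $x_{n+1}\in V(C_{n+1})$ is adjacent to $x_n$ within $C_n$, hence in $G$. Because $V(C_{n+1})\subseteq V(C_n)\setminus\{x_n\}\subseteq V(G)\setminus\{x_1,\dots,x_n\}$, the vertex $x_{n+1}$ is distinct from $x_1,\dots,x_n$, and $\{x_n,x_{n+1}\}\in E(G)$. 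Finally, the components of $G-\{x_1,\dots,x_n\}$ that are contained in $C_n$ are exactly the components of $C_n-x_n$, so $C_{n+1}$ is precisely the component of $G-\{x_1,\dots,x_n\}$ containing $x_{n+1}$, and the invariant persists. Iterating produces a sequence $(x_n)_{n\in\mathbb N}$ of pairwise distinct vertices with $\{x_n,x_{n+1}\}\in E(G)$ for all $n$, i.e.\ a ray $R=(x_1,x_2,\dots)$ with $R\subseteq G$.

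The only real obstacle is organizational: the invariant must be strong enough to simultaneously hand us a neighbour with which to extend the path and survive the deletion of that very vertex. The technical point that makes the induction close is the identification of the components of $G-\{x_1,\dots,x_n\}$ lying inside $C_n$ with the components of $C_n-x_n$; once that is in place, everything else is a direct application of the observation and of the connectedness of $G$. (Alternatively, one could run breadth-first search from $x_1$, note that in a connected locally finite graph each ball is finite so infinitely many distance levels are nonempty, and invoke König's Infinity Lemma on a spanning tree; but the direct construction above is self-contained and avoids re-proving König's lemma.)
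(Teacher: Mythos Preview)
Your argument is correct. The paper does not supply its own proof of this proposition but merely cites Diestel's textbook (Proposition~8.2.1 there), and the argument you wrote is essentially the standard one found in that reference: assume local finiteness, then greedily grow a ray by always stepping into an infinite component of what remains.
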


A proof of Proposition \ref{p3.2} is provided, for instance, in \cite{Diestel2017}, Proposition~8.2.1.

The following statement is well known in the context of finite trees. A proof for infinite trees can be found in \cite{DK2024DLPSSAG}, Lemma~1.

\begin{lemma}\label{l9.14-1}
In each tree, every two different vertices are connected by exactly one path.
\end{lemma}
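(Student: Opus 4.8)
The plan is to prove existence and uniqueness separately. Existence is immediate: a tree is by definition connected, so for any two distinct vertices $x$ and $y$ there is at least one path $P \subseteq T$ joining them. So the real content is uniqueness, and I would argue by contradiction, assuming there exist two distinct vertices $x$ and $y$ joined by two distinct paths, and then producing a cycle in $T$, contradicting the hypothesis that $T$ has no cycles.

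For the uniqueness argument, suppose $P = (x = u_1, u_2, \ldots, u_k = y)$ and $Q = (x = w_1, w_2, \ldots, w_m = y)$ are two distinct paths joining $x$ and $y$. The standard move is to reduce to the case where the two paths are \emph{internally disjoint}: walk along $P$ from $x$ until the first index $i$ at which $P$ and $Q$ start to disagree, i.e. $u_j = w_j$ for $j \leq i$ but the edge $\{u_i, u_{i+1}\}$ differs from $\{w_i, w_{i+1}\}$; set $a = u_i = w_i$. Then, following $P$ from $a$ onward, let $b$ be the first vertex of $P$ after $a$ that also lies on the portion of $Q$ beyond $a$ (such a vertex exists because $y$ is common to both). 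The $a$--$b$ subpath of $P$ and the $a$--$b$ subpath of $Q$ then share only the endpoints $a$ and $b$; since they leave $a$ along different edges they are genuinely different, and concatenating one with the reverse of the other yields a closed walk that visits no vertex twice except that it returns to $a$ — after discarding the degenerate case this is exactly a cycle in $T$. This contradicts the definition of a tree, so $P = Q$.

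The main obstacle — really the only delicate point — is the bookkeeping needed to extract an honest cycle (in the sense of the paper's definition: vertices $x_1, \ldots, x_n$ with $n \geq 3$, enumerated without repetition) from two distinct paths with the same endpoints. One must be careful that after selecting $a$ and $b$ as above, the two subpaths $P_{a,b}$ and $Q_{a,b}$ are each simple (inherited from $P$ and $Q$ being paths), intersect only in $\{a, b\}$ (by the minimal-choice of $b$), and together have length at least $3$ as a cycle — the case $k = m = 2$ forces $P = Q$ outright, and the case where the combined length would be $2$ cannot occur because that would make $P_{a,b}$ and $Q_{a,b}$ the same single edge. One should also note $a \neq b$: if $a = b$ then the first divergence point already lies on the later common portion, which after trimming again collapses to $P = Q$. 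Handling these degenerate configurations cleanly is the whole difficulty; everything else is routine.

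Alternatively — and this may be cleaner to write up — one can take the symmetric difference of the edge sets of $P$ and $Q$, observe that in the subgraph it spans every vertex has even degree (each vertex of $P \cup Q$ has degree $0$ or $2$ in this difference, since degrees in $P$ and in $Q$ are each at most $2$ and the endpoints $x,y$ cancel), that this subgraph is nonempty because $P \neq Q$, and that a nonempty finite graph with all degrees even and no isolated vertices contains a cycle. Either route reaches the contradiction with the acyclicity of $T$; I would present whichever keeps the degenerate-case analysis shortest.
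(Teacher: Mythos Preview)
Your argument is correct and is the standard one: existence is connectivity, and uniqueness comes from extracting a cycle from two distinct $x$--$y$ paths, with the careful trimming to internally disjoint subpaths (or the symmetric-difference parity argument) handling the bookkeeping. Both variants you outline are fine, and the degenerate cases you flag are exactly the ones that need attention.

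However, there is nothing to compare your proof against: the paper does not prove this lemma at all. It simply states that the result is well known for finite trees and refers the reader to Lemma~1 of \cite{DK2024DLPSSAG} for a proof covering the infinite case. So your write-up would be supplying a proof where the paper gives only a citation; if you want to match the paper, the appropriate move is to state the lemma and cite the reference rather than argue it out.
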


In the next definition, we recall the notion of a hull for arbitrary trees.

\begin{definition}\label{d3.6}
Let \(T\) be a tree, and let \(A\) be a nonempty subset of \(V(T)\). A subtree \(H_A\) of the tree \(T\) is the \emph{hull} of \(A\) if \(A \subseteq V(H_A)\) and, for every subtree \(T^{*}\) of \(T\), the tree \(H_A\) is a subtree of \(T^{*}\) whenever \(A \subseteq V(T^*)\).
\end{definition}

Thus, \(H_A\) is the smallest subtree of \(T\) which contains \(A\).

\begin{proposition}\label{p3.7}
Let \(T\) be a tree, \(A\) be a nonempty subset of \(V(T)\) and let \(\mathcal{F}_A\) be the set of all subtrees \(T^{*}\) of \(T\) for which the inclusion \(A \subseteq V(T^{*})\) holds. Then the graph \(\bigcap_{T^{*} \in \mathcal{F}_A} T^{*}\) is the hull of \(A\),
\begin{equation}\label{p3.7:e1}
H_A = \bigcap_{T^{*} \in \mathcal{F}_A} T^{*}.
\end{equation}
\end{proposition}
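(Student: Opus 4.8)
The plan is to verify the two defining properties of the hull from Definition~\ref{d3.6} for the graph $G_{*} := \bigcap_{T^{*} \in \mathcal{F}_A} T^{*}$, after first checking that this intersection is in fact a tree (a subtree of $T$), so that the notion of hull even applies to it. Note that $\mathcal{F}_A$ is nonempty, since $T$ itself belongs to it, so the intersection is well defined.

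First I would show $G_{*}$ is connected, which is the only nontrivial point in verifying it is a subtree: as an intersection of subgraphs of $T$ it automatically has no cycles and is a subgraph of $T$, so connectedness is all that remains. Take two distinct vertices $u, v \in V(G_{*})$. For each $T^{*} \in \mathcal{F}_A$ we have $u, v \in V(T^{*})$, and by Lemma~\ref{l9.14-1} there is a unique path $P_{u,v}^{T^{*}}$ in $T^{*}$ joining them; but $P_{u,v}^{T^{*}}$ is also a path in $T$ joining $u$ and $v$, and by Lemma~\ref{l9.14-1} applied to $T$ this path is unique, hence independent of $T^{*}$ — call it $P_{u,v}$. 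Thus $P_{u,v} \subseteq T^{*}$ for every $T^{*} \in \mathcal{F}_A$, so $P_{u,v} \subseteq G_{*}$, which gives a path in $G_{*}$ joining $u$ and $v$. Therefore $G_{*}$ is connected, and since $V(G_{*}) \supseteq \emptyset$ is nonempty once we check $A \subseteq V(G_{*})$ (immediate, as $A \subseteq V(T^{*})$ for every $T^{*} \in \mathcal{F}_A$), $G_{*}$ is a subtree of $T$ containing $A$; in particular $G_{*} \in \mathcal{F}_A$.

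It remains to check the minimality condition: for every subtree $T^{**}$ of $T$ with $A \subseteq V(T^{**})$ we must have $G_{*} \subseteq T^{**}$. But such a $T^{**}$ is precisely an element of $\mathcal{F}_A$, and $G_{*}$, being the intersection over all members of $\mathcal{F}_A$, satisfies $G_{*} \subseteq T^{**}$ by definition of the intersection of a family of subgraphs (both vertex sets and edge sets of $G_{*}$ are contained in those of $T^{**}$). Hence $G_{*}$ satisfies both clauses of Definition~\ref{d3.6}, so $G_{*}$ is the hull $H_A$, establishing \eqref{p3.7:e1}. (Uniqueness of the hull, if desired, follows from the usual argument: two minimal objects are subtrees of each other, hence equal.)

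The main obstacle is really the connectedness step: one has to observe that the $u$–$v$ path is the \emph{same} path inside every $T^{*} \in \mathcal{F}_A$ — this is what lets the intersection retain a connecting path rather than merely retaining the two endpoints — and this rests on the uniqueness-of-paths lemma (Lemma~\ref{l9.14-1}) applied in the ambient tree $T$. Everything else (acyclicity, the subgraph relation, nonemptiness) is a formal consequence of the definitions of intersection of subgraphs and of $\mathcal{F}_A$.
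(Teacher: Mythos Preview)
Your proof is correct. The paper itself does not give a proof of this proposition; it merely cites \cite{DK2022LTGCCaDUS} for a proof. Your argument is self-contained and sound: the crucial step is indeed connectedness of the intersection, and you handle it correctly by invoking Lemma~\ref{l9.14-1} in the ambient tree $T$ to conclude that the unique $u$--$v$ path in $T$ lies inside every $T^{*} \in \mathcal{F}_A$ and hence in $G_{*}$. The remaining verifications (acyclicity, $A \subseteq V(G_{*})$, minimality) are, as you note, immediate from the definitions.
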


A proof can be found in \cite{DK2022LTGCCaDUS}.

The following proposition gives a ``constructive'' description of the hulls of sets $A \subseteq V(T)$ for trees $T$ with $|V(T)| \geq 2$.

\begin{proposition}\label{p3.7-1} Let $T$ be a tree and let $A \subseteq V(T)$ contain at least two points. Then the equality
\begin{equation}\label{qrrr331}
H_A = \bigcup_{v \in A \setminus \{u\}} P_{u,v}
\end{equation}
holds for every $u \in A$, where $P_{u,v}$ is the path connecting $u$ and $v$ in $T$.
\end{proposition}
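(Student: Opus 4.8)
The plan is to prove the set equality \eqref{qrrr331} by double inclusion, using Proposition~\ref{p3.7} to identify $H_A$ with the intersection of all subtrees of $T$ containing $A$. Write $G^{*} := \bigcup_{v \in A \setminus \{u\}} P_{u,v}$; note this union is nonempty since $|A| \geq 2$ guarantees $A \setminus \{u\} \neq \emptyset$, and each $P_{u,v}$ is a well-defined path by Lemma~\ref{l9.14-1}.

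First I would show $G^{*} \subseteq H_A$. Since $H_A = \bigcap_{T^{*} \in \mathcal{F}_A} T^{*}$, it suffices to check that for every subtree $T^{*}$ of $T$ with $A \subseteq V(T^{*})$ and every $v \in A \setminus \{u\}$, we have $P_{u,v} \subseteq T^{*}$. This is where connectedness and acyclicity of $T^{*}$ enter: since $u, v \in V(T^{*})$ and $T^{*}$ is a tree, there is a path in $T^{*}$ joining $u$ and $v$; but a path joining $u$ and $v$ inside $T^{*} \subseteq T$ is also a path joining them in $T$, and by Lemma~\ref{l9.14-1} the path in $T$ is unique, so it must coincide with $P_{u,v}$. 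Hence $P_{u,v} \subseteq T^{*}$ for all relevant $v$, and taking the union gives $G^{*} \subseteq T^{*}$; intersecting over $\mathcal{F}_A$ yields $G^{*} \subseteq H_A$.

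For the reverse inclusion $H_A \subseteq G^{*}$, by the minimality characterization of the hull (Definition~\ref{d3.6}, or equivalently Proposition~\ref{p3.7}) it is enough to verify that $G^{*}$ is itself a subtree of $T$ containing $A$; then $H_A \subseteq G^{*}$ follows because $G^{*} \in \mathcal{F}_A$. That $A \subseteq V(G^{*})$ is immediate: $u \in V(P_{u,v})$ for any chosen $v \in A\setminus\{u\}$, and every other $w \in A \setminus \{u\}$ lies on $P_{u,w}$. The content is showing $G^{*}$ is a tree, i.e.\ connected and acyclic. Connectedness is easy: every vertex of $G^{*}$ lies on some $P_{u,v}$, hence is joined to $u$ within that path, so any two vertices of $G^{*}$ are joined through $u$. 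Acyclicity is the main obstacle: $G^{*}$ is a subgraph of the tree $T$, and a subgraph of a tree contains no cycles (a cycle in $G^{*}$ would be a cycle in $T$, contradicting that $T$ is a tree). So $G^{*}$ is a connected acyclic graph with nonempty vertex set, i.e.\ a tree, and since $G^{*} \subseteq T$ it is a subtree of $T$.

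Combining the two inclusions gives $H_A = G^{*} = \bigcup_{v \in A \setminus \{u\}} P_{u,v}$, and since $u \in A$ was arbitrary the formula holds for every $u \in A$. The only genuinely delicate point is the acyclicity of $G^{*}$; everything else is a direct consequence of uniqueness of paths in trees (Lemma~\ref{l9.14-1}) and the intersection formula for hulls (Proposition~\ref{p3.7}). One should be mildly careful that ``subgraph of a tree is acyclic'' is used, which is immediate from the definitions of cycle and subgraph but worth stating explicitly.
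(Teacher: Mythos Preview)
Your proposal is correct and follows essentially the same approach as the paper's proof: both establish $G^{*} \subseteq H_A$ by showing each $P_{u,v}$ lies in every subtree $T^{*} \in \mathcal{F}_A$ (via Lemma~\ref{l9.14-1}) and then invoke Proposition~\ref{p3.7}, and both obtain $H_A \subseteq G^{*}$ by checking that $G^{*}$ is a subtree of $T$ containing $A$ and appealing to the minimality in Definition~\ref{d3.6}. The only cosmetic difference is that the paper phrases the second step as ``$G^{*}$ is connected (union of connected subgraphs sharing $u$) and every connected subgraph of a tree is a tree,'' whereas you separate connectedness and acyclicity explicitly; the content is the same.
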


For the proof, see \cite{DV2025JMS}.

Let us introduce the concept of labeled trees.

\begin{definition}\label{d2.4}
A {\it labeled tree} is a pair \((T, l)\), where \(T\) is a tree and \(l\) is a mapping defined on the set \(V(T)\).
\end{definition}
If $(T,l)$ is a labeled tree, then we will say that \(T\) is a \emph{free tree} corresponding to \((T, l)\) and write \(T = T(l)\) instead of \((T, l)\). Moreover, in what follows, we will consider only the nonnegative real-valued labelings \(l\colon V(T)\to \mathbb R^{+}\).

Following~\cite{Dov2020TaAoG}, for arbitrary labeled tree \(T = T(l)\), we define a mapping \(d_l \colon V(T) \times V(T) \to \mathbb R^{+}\) as
\begin{equation}\label{e11.3}
d_l(u, v) = \begin{cases}
0 & \text{if } u = v,\\
\max\limits_{v^{*} \in V(P)} l(v^{*}) & \text{if } u \neq v,
\end{cases}
\end{equation}
where \(P\) is a path joining \(u\) and \(v\) in \(T\).

\begin{theorem}\label{t11.9}
Let \(T = T(l)\) be a labeled tree. The mapping \(d_l\) is an ultrametric  on the set $V(T)$ if and only if the inequality
\begin{equation}\label{t11.9:e1}
\max\{l(u), l(v)\} > 0
\end{equation}
holds for every \(\{u, v\} \in E(T)\).
\end{theorem}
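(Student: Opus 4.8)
The plan is to note first that $d_l$ is well defined: by Lemma~\ref{l9.14-1} the path $P$ joining $u$ and $v$ in \eqref{e11.3} is unique, so \eqref{e11.3} assigns an unambiguous value in $\mathbb{R}^+$. Nonnegativity, $d_l(u,u)=0$, and symmetry are then immediate — the last because $P_{u,v}$ and $P_{v,u}$ are the same graph, so the two maxima in \eqref{e11.3} coincide. Thus the question reduces to two points: whether $d_l$ satisfies the strong triangle inequality (which, together with symmetry, also yields the ordinary triangle inequality), and whether $(d_l(x,y)=0)\iff(x=y)$; only the latter should depend on hypothesis \eqref{t11.9:e1}.

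For the strong triangle inequality I would show it holds for \emph{every} labeled tree, with no assumption on $l$. Fix $x,y,z\in V(T)$; the inequality is trivial if two of them coincide, so assume them pairwise distinct. The union $P_{x,z}\cup P_{z,y}$ is a connected subgraph of $T$ (both paths contain $z$), hence a subtree of $T$ containing $\{x,y\}$; by Proposition~\ref{p3.7-1} (equivalently Definition~\ref{d3.6}) the hull $H_{\{x,y\}}=P_{x,y}$ is a subtree of it, so $V(P_{x,y})\subseteq V(P_{x,z})\cup V(P_{z,y})$. Taking the maximum of $l$ over these vertex sets gives $d_l(x,y)\leq\max\{d_l(x,z),d_l(z,y)\}$.

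It then remains to prove that the separation axiom $(d_l(x,y)=0)\iff(x=y)$ is equivalent to \eqref{t11.9:e1} holding on every edge. For the forward implication I would use the contrapositive: if some $\{u,v\}\in E(T)$ has $\max\{l(u),l(v)\}=0$, then $l(u)=l(v)=0$, and since the unique path joining $u$ and $v$ is the single edge $(u,v)$, \eqref{e11.3} gives $d_l(u,v)=\max\{l(u),l(v)\}=0$ although $u\neq v$; hence $d_l$ is not even a metric. Conversely, assuming \eqref{t11.9:e1} on all edges, take any $x\neq y$: the path $P_{x,y}$ has at least two vertices, hence at least one edge $\{u,v\}$, so $d_l(x,y)=\max_{v^{*}\in V(P_{x,y})}l(v^{*})\geq\max\{l(u),l(v)\}>0$. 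Combining this with the first two paragraphs establishes the equivalence.

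The only step with genuine content is the inclusion $V(P_{x,y})\subseteq V(P_{x,z})\cup V(P_{z,y})$ behind the strong triangle inequality — the fact that a concatenation of two paths in a tree must contain the direct path between the endpoints. I expect this to be the main (and essentially the only) obstacle, and it is dispatched cleanly by the hull machinery of Proposition~\ref{p3.7-1}; everything else is bookkeeping with the definition \eqref{e11.3}.
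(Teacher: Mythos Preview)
Your argument is correct. The paper does not actually prove Theorem~\ref{t11.9}; it merely states that a proof ``can be obtained by simple modification of the proof of Proposition~3.2 in \cite{Dov2020TaAoG}'', so there is no in-paper proof to compare against. Your self-contained argument---handling the strong triangle inequality via the hull machinery (Definition~\ref{d3.6}/Proposition~\ref{p3.7-1}) to get $V(P_{x,y})\subseteq V(P_{x,z})\cup V(P_{z,y})$, and reducing the separation axiom to the edge condition~\eqref{t11.9:e1}---is clean and uses only tools already present in the paper.
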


A proof of Theorem \ref{t11.9} can be obtained by simple modification of the proof of Proposition~3.2 in \cite{Dov2020TaAoG}.

In what follows we shall say that a labeling $l \colon V(T) \to \mathbb{R}^+$ is {\it non-degenerate} if the inequality
\begin{equation*}
\max \{ l(u), l(v) \} > 0
\end{equation*}
holds for every $\{u, v\} \in E(T)$.

\section{Separability of Generated Ultrametrics}\label{sec3}

 The goal of this section is to describe the structure of free trees $T$ which admit non-degenerate labelings 
$l \colon V(T) \to \mathbb{R}^+$ generating separable ultrametric spaces $(V(T), d_l)$.

In what follows, we will say that a set $A$ is {\it countable} if the inequality $|A| \leq \aleph_0$ holds.

\begin{lemma}\label{wseet} Let $T$ be a tree. If the inequality
\begin{equation}
\label{umiiw}
\delta_T(v) \leq \aleph_0 
\end{equation}
holds for every $v \in V(T)$, then the set $V(T)$ is countable, 
\begin{equation}
\label{efikki}
|V(T)| \leq \aleph_0. 
\end{equation}
\end{lemma}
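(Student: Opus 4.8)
The plan is to fix a root vertex, stratify \(V(T)\) into \emph{levels} according to graph distance from the root, prove by induction that each level is countable, and then conclude that \(V(T)\) — being a countable union of countable sets — is itself countable.

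\textbf{Setting up the levels.} Since \(V(T) \neq \emptyset\) by the definition of a tree, I would fix an arbitrary vertex \(v_0 \in V(T)\). By Lemma~\ref{l9.14-1}, for every \(v \in V(T)\) with \(v \neq v_0\) there is exactly one path \(P_{v_0,v}\) joining \(v_0\) and \(v\); let \(\rho(v_0,v)\) denote the number of edges of \(P_{v_0,v}\), and set \(\rho(v_0,v_0)=0\). For \(n \in \mathbb{N}\cup\{0\}\) put \(V_n = \{v \in V(T) \colon \rho(v_0,v)=n\}\). Because \(T\) is connected, every vertex is joined to \(v_0\) by a path, so \(V(T) = \bigcup_{n=0}^{\infty} V_n\) (and, by uniqueness of paths, the \(V_n\) are pairwise disjoint, though disjointness is not needed).

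\textbf{Each level is countable.} I would prove \(|V_n| \leq \aleph_0\) by induction on \(n\). The base case is trivial, since \(V_0 = \{v_0\}\). For the inductive step, let \(w \in V_{n+1}\) and write \(P_{v_0,w} = (v_0 = y_0, y_1, \ldots, y_{n+1} = w)\). Then \((y_0,\ldots,y_n)\) is a path joining \(v_0\) and \(y_n\) with \(n\) edges, so \(y_n \in V_n\) by uniqueness of paths, while \(\{y_n,w\} \in E(T)\), i.e.\ \(w \in N(y_n)\). Hence
\[
V_{n+1} \subseteq \bigcup_{u \in V_n} N(u).
\]
By the induction hypothesis \(V_n\) is countable, and by hypothesis~\eqref{umiiw} each \(N(u)\) satisfies \(|N(u)| = \delta_T(u) \leq \aleph_0\); therefore \(V_{n+1}\) is contained in a countable union of countable sets and is countable.

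\textbf{Conclusion.} Finally, \(V(T) = \bigcup_{n=0}^{\infty} V_n\) is a countable union of countable sets, whence \(|V(T)| \leq \aleph_0\), which is \eqref{efikki}. The only real content — essentially a technical point — is the well-definedness of the ``parent'' map sending each \(w \in V_{n+1}\) to its unique neighbour \(y_n \in V_n\) on \(P_{v_0,w}\); this rests entirely on the uniqueness of paths in trees (Lemma~\ref{l9.14-1}). Everything else is the standard fact that a countable union of countable sets is countable.
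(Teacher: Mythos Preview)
Your proof is correct and follows essentially the same approach as the paper: fix a root, stratify \(V(T)\) into layers (you use exact graph-distance levels \(V_n\), the paper uses iterated neighbourhoods \(N_j(v^{*}) = \bigcup_{u \in N_{j-1}(v^{*})} N(u)\)), show inductively that each layer is countable because it is contained in a countable union of sets \(N(u)\) with \(|N(u)|\le\aleph_0\), and conclude via a countable union of countable sets. The only cosmetic difference is that your levels are pairwise disjoint while the paper's \(N_j\) overlap, but the inductive step and the final argument are identical.
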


\begin{proof} 

Let $v^*$ be a fixed vertex of $T$. Then we can define a sequence $(N_j(v^*))_{j \in \mathbb{N}}$ by
\begin{equation*}
N_1(v^*): = N(v^*), \quad N_2(v^*): = \bigcup_{u \in N_1(v^*)} N(u),
\end{equation*}
and, for every integer $j \geq 2$,
\begin{equation}
\label{edik}
N_{j+1}(v^*) := \bigcup_{u \in N_j(v^*)} N(u),
\end{equation}
where $N(v^*)$ and $N(u)$ are defined as in \eqref{edtg}.

Since $T$ is a connected graph, the equality 
\begin{equation}
\label{umidsc}
V(T) = \bigcup_{j \in \mathbb{N}} N_j(v^*)
\end{equation}
holds.
Let \eqref{umiiw} hold for every $v \in V(T)$. Inequality \eqref{umiiw} can be rewritten as
\begin{equation*}
|N(v)| \leq \aleph_0. 
\end{equation*} Hence, by \eqref{edik}, every $N_j(v^*)$ is a countable union of countable sets. Thus the inequality
\begin{equation*}
|N_j(v^*)| \leq \aleph_0
\end{equation*}
holds for every $j \in \mathbb{N}$.
Similarly, the last inequality and equality \eqref{umidsc} imply inequality \eqref{efikki}.

\end{proof}

The next theorem is the main result of this section.

\begin{theorem}\label{ggewj77}
Let $T$ be a tree. Then the following statements are equivalent:
\begin{enumerate}[label=\textit{(\roman*)}, left=0pt]
    \item The vertex set of $T$ is  countable.

   \item For every non-degenerate labeling $l \colon V(T) \to \mathbb{R}^+$, the ultrametric space $(V(T), d_l)$ is separable.
    
    \item There exists a labeling $l \colon V(T) \to \mathbb{R}^+$ such that $(V(T), d_l)$ is a separable ultrametric space.

\end{enumerate}
\end{theorem}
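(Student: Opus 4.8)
The plan is to prove the cycle of implications $(i)\Rightarrow(ii)\Rightarrow(iii)\Rightarrow(i)$, since $(ii)\Rightarrow(iii)$ is immediate (any non-degenerate labeling exists — e.g.\ the constant labeling $l\equiv 1$ — and $(ii)$ applies to it). The substantive content lies in $(i)\Rightarrow(ii)$ and $(iii)\Rightarrow(i)$.

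For $(i)\Rightarrow(ii)$: assume $|V(T)|\le\aleph_0$ and let $l$ be any non-degenerate labeling, so by Theorem~\ref{t11.9} the map $d_l$ is an ultrametric on $V(T)$. Then $V(T)$ itself is a countable dense subset of the metric space $(V(T),d_l)$ (a set is dense in itself), so $(V(T),d_l)$ is separable by definition. This direction is essentially trivial once Theorem~\ref{t11.9} is invoked.

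For $(iii)\Rightarrow(i)$: this is the heart of the matter, and I expect it to be the main obstacle. Suppose $(V(T),d_l)$ is separable for some labeling $l$. I want to conclude $|V(T)|\le\aleph_0$. By Lemma~\ref{wseet} it suffices to show every vertex of $T$ has at most countable degree, so suppose for contradiction that some $v^*\in V(T)$ has $\delta_T(v^*)>\aleph_0$, i.e.\ $N(v^*)$ is uncountable. The idea is to exhibit an uncountable discrete (equivalently, uniformly separated) subset of $(V(T),d_l)$, which contradicts separability (an uncountable subset with a positive uniform lower bound on pairwise distances cannot be covered by countably many points being approximated, and more directly: by Lemma~\ref{satur} any subspace of a separable space is separable, but an uncountable uniformly discrete space is not separable). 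For each $u\in N(v^*)$, the path joining $u$ and $v^*$ is just the edge $\{u,v^*\}$, so by \eqref{e11.3} we have $d_l(u,v^*)=\max\{l(u),l(v^*)\}$. Also for distinct $u_1,u_2\in N(v^*)$, the path joining them passes through $v^*$ (by Lemma~\ref{l9.14-1}, since $T$ is a tree and $v^*$ lies on the $u_1$–$u_2$ path), hence $d_l(u_1,u_2)=\max\{l(u_1),l(v^*),l(u_2)\}$.

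The remaining work is to extract an uncountable subset of $N(v^*)$ on which $d_l$ is bounded below by a fixed positive constant. Split into cases on $l(v^*)$. If $l(v^*)>0$, then for \emph{every} pair $u_1\ne u_2$ in $N(v^*)$ we get $d_l(u_1,u_2)\ge l(v^*)>0$, so $N(v^*)$ itself is uncountable and uniformly discrete — contradiction. If $l(v^*)=0$, then non-degeneracy forces $l(u)>0$ for every $u\in N(v^*)$; partition $N(v^*)=\bigcup_{n\in\mathbb N}\{u\in N(v^*):l(u)\ge 1/n\}$, and since $N(v^*)$ is uncountable, some piece $S_n:=\{u\in N(v^*):l(u)\ge 1/n\}$ is uncountable. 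For $u_1\ne u_2$ in $S_n$ we get $d_l(u_1,u_2)=\max\{l(u_1),l(u_2)\}\ge 1/n>0$, so $S_n$ is an uncountable uniformly discrete subspace of $(V(T),d_l)$. In either case Lemma~\ref{satur} gives a separable subspace which is simultaneously uncountable and uniformly discrete — but an uncountable set with pairwise distances bounded below by $\varepsilon>0$ has the property that the balls of radius $\varepsilon/2$ about its points are disjoint, so no countable set can be dense, a contradiction. Hence every degree is at most $\aleph_0$, and Lemma~\ref{wseet} yields $|V(T)|\le\aleph_0$, completing the cycle.
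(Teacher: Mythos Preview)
Your proof is correct and follows essentially the same route as the paper: the same cycle $(i)\Rightarrow(ii)\Rightarrow(iii)\Rightarrow(i)$, the same reduction via Lemma~\ref{wseet} to a vertex $v^*$ of uncountable degree, the same case split on whether $l(v^*)>0$ or $l(v^*)=0$, and in the latter case the same pigeonhole partition $N(v^*)=\bigcup_n\{u:l(u)\ge 1/n\}$ to extract an uncountable uniformly discrete subspace, contradicting separability via Lemma~\ref{satur}.
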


\begin{proof} 

$(i)$ $\Rightarrow$ $(ii)$. 
This implication follows directly from the definition of separability.

$(ii)$ $\Rightarrow$ $(iii)$.
The implication is true because every tree $T$ admits a non-degenerate labeling 
$l \colon V(T) \to \mathbb{R}^+$.

$(iii)$ $\Rightarrow$ $(i)$. Suppose, to the contrary, that there exists a non-degenerate labeling $l \colon V(T) \to \mathbb{R}^+$ such that $(V(T), d_l)$ is separable, but the inequality
\begin{equation*}
|V(T)| > \aleph_0
\end{equation*}
holds. Then, by Lemma \ref{wseet}, there is $v^* \in V(T)$ such that 
\begin{equation}
\label{tag2}
\delta_T(v^*) > \aleph_0. 
\end{equation}
Let us suppose first that $l(v^*) > 0$. Then equality~\eqref{e11.3} gives us
\begin{equation}\label{okjfe}
d_l(u,v) = \max \{ l(v), l(v^*), l(u) \} \geq  l(v^*)  > 0
\end{equation}
for all different $u, v \in N(v^*)$.

Hence, the ultrametric space $  (N(v^*), d_l|_{N(v^*) \times N(v^*)})$   is a discrete subspace of $(V(T), d_l)$.
Moreover, this subspace is a separable subspace of $(V(T), d_l)$ by Lemma~\ref{satur}.
In every non-empty discrete metric space $(X,d)$, every dense subset of $(X,d)$ coincides with
$X$. Consequently, the inequality
\begin{equation*}
|N(v^*)| \leq \aleph_0
\end{equation*}
holds because $  (N(v^*), d_l|_{N(v^*) \times N(v^*)})$ is separable.
The last inequality and the equality
\begin{equation*}
|N(v^*)| = \delta_T(v^*)
\end{equation*}
imply that inequality \eqref{tag2} is false, a contradiction to the supposition.

Let us now consider the case when the equality 
\begin{equation}\label{wsroii}
    l(v^*) = 0
\end{equation}
holds.

Since $l \colon V(T) \to \mathbb{R}^+$ is non-degenerate, equality \eqref{wsroii} 
implies the inequality 
\begin{equation}\label{e774}
    l(v) > 0
\end{equation}
for every $v \in N(v^*)$.

For each $m \in \mathbb{N}$, we denote by $N^m(v^*)$ the set of all $v \in N(v^*)$ satisfying the inequality
\begin{equation*}
l(v) \geq \frac{1}{m}.
\end{equation*}

Since \eqref{e774} holds for every $v \in N(v^*)$, we obtain the equality
\begin{equation*}
N(v^*) = \bigcup_{m \in \mathbb{N}} N^m(v^*).
\end{equation*}

Using the last equality and inequality~\eqref{tag2}, we can find $m_0 \in \mathbb{N}$ such that
\begin{equation}\label{rvhww33}
|N^{m_0}(v^*)| > \aleph_0. 
\end{equation}

Indeed, otherwise $N(v^*)$ is a countable union of countable sets $N^m(v^*)$ and hence also is countable,
\begin{equation*}
    |N(v^*)| \leq \aleph_0,
\end{equation*}
contrary to \eqref{tag2}.

Let $m_0 \in \mathbb{N}$ satisfy \eqref{rvhww33}. Then $N^{m_0}(v^*)$ is an uncountable subset of $V(T)$.
Then, arguing as in proof of \eqref{okjfe}, we obtain
\begin{equation*}
d_l(u, v) \geq \frac{1}{m_0}
\end{equation*}
for all distinct $u, v \in N^{m_0}(v^*)$.

Thus, $(N^{m_0}(v^*), d_l|_{N^{m_0}(v^*) \times N^{m_0}(v^*)})$ is an uncountable discrete subspace of $(V(T), d_l)$.
As was proven above in the analysis of the case $l(v^*) > 0$, the last statement leads to a contradiction with inequality~\eqref{tag2}.

This completes the proof.

\end{proof}

Theorem~\ref{ggewj77} and Lemma \ref{wseet} give us the following corollary.

\begin{corollary}\label{zaz}
    Let $T$ be a tree. Then the following statements are equivalent:

\begin{enumerate}[label=\textit{(\roman*)}, left=0pt]
    \item There exists a labeling $l \colon V(T) \to \mathbb{R}^+$ such that $(V(T), d_l)$ is a separable ultrametric space.

\item For every star graph $S \subseteq T$ there exists a non-degenerate labeling $l_S \colon V(S) \to \mathbb{R}^+$ such that the ultrametric space $(V(S), d_{l_S})$ is separable.

\end{enumerate}
\end{corollary}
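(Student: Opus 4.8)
The plan is to prove Corollary~\ref{zaz} by reducing both implications to Theorem~\ref{ggewj77}, using Lemma~\ref{wseet} to translate the countability of $V(T)$ into a statement about vertex degrees, and the fact that a vertex of large degree together with its neighbors forms a large star graph inside $T$.

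\textbf{Proof of the implication $(i)\Rightarrow(ii)$.} Assume $(i)$ holds. By Theorem~\ref{ggewj77}, $V(T)$ is countable. Let $S\subseteq T$ be any star graph with center $c$. Then $V(S)\subseteq V(T)$, so $V(S)$ is countable as well. Since $S$ is itself a tree, Theorem~\ref{ggewj77} (applied to $S$ in place of $T$) shows that for every non-degenerate labeling $l_S\colon V(S)\to\mathbb{R}^+$ the space $(V(S),d_{l_S})$ is separable; in particular such a labeling exists because $S$ admits a non-degenerate labeling (e.g.\ the constant labeling $1$). This gives $(ii)$.

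\textbf{Proof of the implication $(ii)\Rightarrow(i)$.} I would argue by contraposition. Suppose $(i)$ fails; by Theorem~\ref{ggewj77} this means $V(T)$ is uncountable, $|V(T)|>\aleph_0$. By Lemma~\ref{wseet} there is a vertex $v^*\in V(T)$ with $\delta_T(v^*)>\aleph_0$, i.e.\ $|N(v^*)|>\aleph_0$. Let $S$ be the subgraph of $T$ with vertex set $\{v^*\}\cup N(v^*)$ and edge set $\{\{v^*,u\}\colon u\in N(v^*)\}$. Then $S$ is a star graph with center $v^*$ and $S\subseteq T$, but $V(S)$ is uncountable. Now for every non-degenerate labeling $l_S\colon V(S)\to\mathbb{R}^+$, Theorem~\ref{ggewj77} applied to the tree $S$ forces $(V(S),d_{l_S})$ to be non-separable (since separability of $(V(S),d_{l_S})$ for even one labeling would, by $(iii)\Rightarrow(i)$ of that theorem, make $V(S)$ countable). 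Hence $(ii)$ fails, which is what we needed.

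\textbf{Main obstacle.} There is essentially no deep obstacle here: once Theorem~\ref{ggewj77} is available, the only thing to check carefully is that a single high-degree vertex of $T$ really does yield a star subgraph of the same cardinality, and that Theorem~\ref{ggewj77} may legitimately be applied to this subgraph (which is trivial, since the theorem is stated for arbitrary trees and any star graph is a tree). The slightly delicate bookkeeping point is the direction of the biconditionals: in $(i)\Rightarrow(ii)$ we use the implication $(i)\Rightarrow(ii)$ of Theorem~\ref{ggewj77} on each $S$, while in $(ii)\Rightarrow(i)$ we use the implication $(iii)\Rightarrow(i)$ of Theorem~\ref{ggewj77}, contrapositively, on the specific star $S$ built from $v^*$. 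I would also remark that $(ii)$ could equivalently be stated with ``for every star graph $S\subseteq T$ and every non-degenerate labeling $l_S$'' in place of ``there exists a non-degenerate labeling $l_S$'', since Theorem~\ref{ggewj77} makes these two formulations coincide; I would not belabor this but might note it after the proof.
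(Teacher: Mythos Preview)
Your proof is correct and follows precisely the route the paper indicates: the paper merely states that the corollary follows from Theorem~\ref{ggewj77} and Lemma~\ref{wseet}, and your argument is exactly the natural unpacking of that claim. Nothing needs to be changed.
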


The following corollary describes the labeled star graphs generating separable ultrametric spaces.

\begin{corollary}\label{dfuj247-1}
Let $S = S(l)$ be a star graph with non-degenerate labeling $l_S \colon V(S) \to \mathbb{R}^{+}$. Then the following statements are equivalent:
\begin{enumerate}[label=\textit{(\roman*)}, left=0pt] 
\item The ultrametric space $(V(S), d_{l_S})$ is separable.

\item The set 
\begin{equation}\label{rvgujq-1}
W_\varepsilon := \{v \in V(S) \colon l_S(v) \leq \varepsilon\}
\end{equation}
is countable for every $\varepsilon > 0$.
\end{enumerate}
\end{corollary}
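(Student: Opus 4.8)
The plan is to show that each of the two conditions $(i)$ and $(ii)$ is equivalent to the single statement ``$V(S)$ is countable'', after which the corollary follows immediately from Theorem~\ref{ggewj77}.

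\emph{Step 1: $(ii)\iff V(S)$ is countable.} The key remark is that, since $l_S$ takes values in $\mathbb R^+=[0,\infty)$, every vertex $v$ has a finite label and therefore lies in $W_n$ for any integer $n\ge l_S(v)$; hence
\[
V(S)=\bigcup_{n\in\mathbb N}W_n .
\]
If $(ii)$ holds, then in particular every $W_n$ with $n\in\mathbb N$ is countable, so $V(S)$ is a countable union of countable sets and is thus countable. Conversely, if $V(S)$ is countable, then every $W_\varepsilon\subseteq V(S)$ is countable, which is $(ii)$.

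\emph{Step 2: $(i)\iff V(S)$ is countable.} This is exactly Theorem~\ref{ggewj77} applied to the tree $T=S$. Indeed, since $l_S$ is non-degenerate, the hypothesis of $(i)$ is precisely statement $(iii)$ of that theorem, which is equivalent there to statement $(i)$, namely that $V(S)$ is countable. Conversely, if $V(S)$ is countable, then the implication $(i)\Rightarrow(ii)$ of Theorem~\ref{ggewj77} applied to the non-degenerate labeling $l_S$ shows that $(V(S),d_{l_S})$ is separable, i.e. $(i)$ holds.

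Combining Steps~1 and~2 gives $(i)\iff(ii)$. The only mildly delicate point is the identity $V(S)=\bigcup_{n\in\mathbb N}W_n$ together with the standard fact that a countable union of countable sets is countable; the rest is a direct appeal to Theorem~\ref{ggewj77}. (In fact the star-graph hypothesis is not used: the argument works verbatim for any tree carrying a non-degenerate real-valued labeling. If one wanted a self-contained proof of Step~2 in the star case, one could instead split on the label of the center $c$: if $l_S(c)>0$ then every path between two distinct vertices of $S$ meets $c$, so $d_{l_S}\ge l_S(c)>0$ and the space is uniformly discrete, whence separability forces countability; if $l_S(c)=0$ then non-degeneracy forces $l_S(v)>0$ for every leaf $v$, each leaf is an isolated point, and again separability forces $V(S)$ to be countable.)
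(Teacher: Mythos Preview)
Your proof is correct and follows essentially the same route as the paper: both arguments pass through the intermediate statement ``$V(S)$ is countable'', using the decomposition $V(S)=\bigcup_n W_{\varepsilon_n}$ together with the fact that a countable union of countable sets is countable, and then invoke Theorem~\ref{ggewj77}. Your observation that the star-graph hypothesis is actually irrelevant (the result holds for any tree with a non-degenerate labeling) is a valid remark not made explicit in the paper.
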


\begin{proof}

$(i) \Rightarrow (ii)$. Let (i) hold. Then $V(S)$ is countable by Theorem~\ref{ggewj77}. Hence, for every $\varepsilon > 0$, the set $W_\varepsilon$ is also countable as a subset of a countable set.

$(ii) \Rightarrow (i)$. Let (ii) hold and let $(\varepsilon_n)_{n \in \mathbb{N}}$ be a sequence of positive numbers such that
\begin{equation*}
\lim_{n \to \infty} \varepsilon_n = \infty.
\end{equation*}

The last limit relation and \eqref{rvgujq-1} imply the equality
\begin{equation}
\bigcup_{n \in \mathbb{N}} W_{\varepsilon_n} = V(S).
\end{equation}

By statement $(ii)$, each $W_{\varepsilon_n}$ is countable. Hence $V(S)$ also is countable as a countable union of countable sets. Hence $(V(S), d_{l_S})$ is separable by Theorem~\ref{ggewj77}.

This completes the proof.
\end{proof}

\section{Local Finiteness of Generated Ultrametrics}\label{sec4}

Let us now turn to locally finite ultrametric spaces $(V(T), d_l)$.

\begin{lemma}\label{ljhg35}
Let $T = T(l)$ be a labeled tree with non-degenerate labeling $l \colon V(T) \to \mathbb{R}^+$, let $A$ be a nonempty bounded subset of the ultrametric space $(V(T), d_l)$. 
Then the vertex set of hull $H_A$ is also a bounded subset of $(V(T), d_l)$.
\end{lemma}

\begin{proof}

If $A$ is a singleton set, then $V(H_A)$ is also a singleton set and, consequently, $V(H_A)$ is bounded.

Suppose that $A$ contains at least two points and let $u$ be an arbitrary point in $A$.
Then, by Proposition~\ref{p3.7-1}, the equality
\begin{equation}\label{qrrr331-s}
H_A = \bigcup_{v \in A \setminus \{u\}} P_{u,v}
\end{equation}
holds. Consider an arbitrary point $w \in V(H_A)$. Equality \eqref{qrrr331-s} implies that there is $v \in A \setminus \{u\}$ such that the point $w$ is a vertex of the path $P_{u,v}$.

Consequently the inequality
\begin{equation}\label{stad1}
d_l(u, w) \leq d_l(u, v)
\end{equation}
holds by definition of the ultrametric $d_l$. Since $A$ is a bounded subset of $(V(T), d_l)$, there is $r > 0$ such that
\begin{equation}\label{stad2}
d_l(u, v) \leq r 
\end{equation}
for all $v \in V(H_A)$. Thus, the set $V(H_A)$ is a subset of the closed ball $B_r(u)$ and, consequently, $V(H_A)$ is bounded as required.

\end{proof}

The following proposition gives a complete description of labelings $l \colon V(T) \to \mathbb{R}^+$ generating locally finite ultrametric spaces $(V(T), d_l)$ for the case where trees $T$ are rays.

\begin{proposition}\label{52dcij}
Let $l_R\colon V(R) \to \mathbb{R}^+$ be a non-degenerate labeling on the vertex set of a ray $R = (v_1, v_2, \ldots, v_n, \ldots)$. Then the following statements are equivalent:

\begin{enumerate}[label=\textit{(\roman*)}, left=0pt]
\item The ultrametric space $(V(R), d_{l_R})$ is locally finite.

\item The limit relation
\begin{equation}\label{tghuaa}
\limsup_{n \to \infty} l_R(v_n) = \infty 
\end{equation}
holds.
\end{enumerate}
\end{proposition}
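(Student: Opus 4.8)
The plan is to prove the two implications separately, exploiting the explicit form of $d_{l_R}$ on a ray. The decisive elementary fact is that, for $i<j$, the unique path in $R$ joining $v_i$ and $v_j$ has vertex set $\{v_i,v_{i+1},\dots,v_j\}$, so \eqref{e11.3} specializes to
\[
d_{l_R}(v_i,v_j)=\max_{i\le k\le j} l_R(v_k)\qquad\text{for all } i<j .
\]
In particular $d_{l_R}(v_i,v_j)\ge l_R(v_k)$ whenever $i\le k\le j$, and for fixed $i$ the quantity $d_{l_R}(v_i,v_j)$ is nondecreasing in $j$.

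For $(i)\Rightarrow(ii)$ I would argue by contraposition. Assuming $\limsup_{n\to\infty} l_R(v_n)<\infty$, I first note the small real-analysis remark that a finite $\limsup$ of a sequence forces a finite supremum of the \emph{whole} sequence: choosing $N_0$ with $\sup_{n\ge N_0} l_R(v_n)<\infty$ and adjoining the finitely many earlier terms gives $S:=\sup_{n\in\mathbb N} l_R(v_n)<\infty$. Also $S>0$, since $d_{l_R}$ is an ultrametric (Theorem~\ref{t11.9}) and hence $S\ge d_{l_R}(v_1,v_2)>0$. By the displayed formula, $d_{l_R}(v_1,v_n)\le S$ for every $n$, so $V(R)\subseteq B_S(v_1)$; thus $V(R)$ is an infinite bounded subset of itself, contradicting local finiteness. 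Hence $(i)$ implies \eqref{tghuaa}.

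For $(ii)\Rightarrow(i)$, assume \eqref{tghuaa} and let $A\subseteq V(R)$ be bounded, say $A\subseteq B_r(v_k)$ for some $k\in\mathbb N$ and $r>0$. Because $\limsup_{n\to\infty} l_R(v_n)=\infty$, there is an index $N>k$ with $l_R(v_N)>r$. Then for every $m\ge N$ the path joining $v_k$ and $v_m$ passes through $v_N$, so $d_{l_R}(v_k,v_m)\ge l_R(v_N)>r$, i.e.\ $v_m\notin B_r(v_k)$. Therefore $B_r(v_k)\subseteq\{v_1,\dots,v_{N-1}\}$, which is finite, and consequently $A$ is finite. This yields local finiteness of $(V(R),d_{l_R})$.

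I do not expect a genuine obstacle here: the argument is short once the ball structure on a ray is understood. The only place needing a little care is the first implication, namely the passage from a finite $\limsup$ to a single ball containing all of $V(R)$, which relies on the sequence being bounded above rather than merely eventually bounded; the role of non-degeneracy is confined to guaranteeing, via Theorem~\ref{t11.9}, that the radius $S$ can be taken strictly positive.
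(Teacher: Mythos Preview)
Your proof is correct. For $(i)\Rightarrow(ii)$ the contrapositive argument matches the paper's almost exactly: both pass from a finite $\limsup$ to a finite global supremum and then observe that the whole vertex set is an infinite bounded ball.

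For $(ii)\Rightarrow(i)$, however, you take a shorter route than the paper. You fix a ball $B_r(v_k)$ containing $A$, locate a single index $N>k$ with $l_R(v_N)>r$, and conclude at once that $B_r(v_k)\subseteq\{v_1,\dots,v_{N-1}\}$. The paper instead extracts an \emph{infinite} subsequence $n_1<n_2<\cdots$ with $l_R(v_{n_k})\ge c$, partitions the ray into consecutive finite paths $P_k=(v_{n_{k-1}},\dots,v_{n_k})$, and argues by contradiction that a bounded set $A$ must lie inside a single $P_{k_0}$ by analyzing what happens when $A$ meets two distinct paths. Both arguments rest on the same blocking phenomenon---a vertex of large label forces any path crossing it to have $d_{l_R}$-length at least that label---but by centering the ball at a specific vertex $v_k$ you need only one barrier beyond $v_k$, which eliminates the subsequence construction and the two-path case analysis. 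The paper's decomposition would become more natural if one wanted the slightly stronger conclusion that \emph{every} bounded set sits inside one of the fixed blocks $P_k$, but for local finiteness your argument is the more economical one.
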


\begin{proof}

$ (i)$ $\Rightarrow$ $(ii)$. 
Suppose, that the limit superior in \eqref{tghuaa} is a finite 
\begin{equation}
\limsup_{n \to \infty} l_R(v_n) = k < \infty.
\end{equation}
Then for each $\varepsilon > 0$, there is $m \in \mathbb{N}$ such that  
\begin{equation*}
l_R(v_n) \leq k + \varepsilon 
\end{equation*}
for all $n \geq m.$ It implies the inequality  
\begin{equation*}
l_R(v_n) \leq \max\{l_R(v_1), \ldots, l_R(v_{m-1}), k + \varepsilon\}
\end{equation*}
for all $n \in \mathbb{N}$. Hence the inequality
\begin{equation}\label{oi45dv}
\sup_{n \in \mathbb{N}} l_R(v_n) < \infty
\end{equation}
holds. Now using \eqref{e11.3} and \eqref{oi45dv}, we obtain
\begin{equation*}
d_{l_R}(u, w) \leq \sup_{n \in \mathbb{N}} l_R(v_n) < \infty
\end{equation*}
for all $u, w \in (V(R),d_{l_R})$. Consequently, $(V(R),d_{l_R})$ is bounded and infinite, contrary to the supposition.

$(ii)$ $\Rightarrow$ $(i)$. 
Let $A \subseteq V(R)$ be an arbitrary bounded subset.
We must prove that $A$ is finite.
Since $A$ is bounded, there is a constant $c > 0$ such that
\begin{equation}\label{jtfdqw11}
d_{l_R}(u, w) < c 
\end{equation}
for all $u, w \in A$. Using \eqref{tghuaa} we can find an infinite subsequence $(u_{n_k})_{k \in \mathbb{N}}$ of the sequence $(v_n)_{n \in \mathbb{N}}$ such that $n_1 \geq 2$ and
\begin{equation}\label{onhwer12}
l_R(v_{n_k}) \geq c
\end{equation}
for each $k \in \mathbb{N}$. Let us define a sequence $(P_k)_{k \in \mathbb{N}}$ of paths in $R$ by the rule: 
\begin{equation*}
P_1= (v_1, \ldots, v_{n_1})
\end{equation*}
is
the path joining $v_1$ and $v_{n_1}$, and 
\begin{equation*}
P_k = (v_{n_{k-1}}, \ldots, v_{n_k})
\end{equation*}
for every $k \geq 2$.

We claim that a sequence $(P_k)_{k \in \mathbb{N}}$ contains a path $P_{k_0}$ such that
\begin{equation}\label{ettbu531}
A \subseteq V(P_{k_0}). 
\end{equation}

Indeed if the last claim is false, then there exist two distinct paths $P_{k_1}$ and $P_{k_2}$ satisfying the conditions
\begin{equation}\label{ettbu532}
A \cap V(P_{k_1}) \neq \emptyset \neq A \cap V(P_{k_2}), 
\end{equation}
and
\begin{equation}\label{ettbu533}
A \cap V(P_{k_1}) \neq A \cap V(P_{k_2}). 
\end{equation}

Using \eqref{ettbu532} and \eqref{ettbu533}, we can find $a_1, a_2 \in A$ such that
\begin{equation}\label{ettbu534}
a_1 \in V(P_{k_1}) \setminus V(P_{k_2}) \quad \text{and} \quad a_2 \in V(P_{k_2}) \setminus V(P_{k_1}). 
\end{equation}
Without loss of generality, we assume that the inequality
\begin{equation}\label{ettbu535}
k_1 < k_2 
\end{equation}
holds. 
Let $P_{a_1,a_2}$ be a path joining $a_1$ and $a_2$ in $R$. 
Membership relations \eqref{ettbu534} and inequality \eqref{ettbu535} imply 
\begin{equation}\label{ettbu536}
v_{n_{k_1}} \in V(P_{a_1,a_2}). 
\end{equation}
Hence
\begin{equation}\label{ettbu537}
d_{l_R}(a_1, a_2) = \max_{v \in P_{a_1,a_2}} l(v)\geq l(v_{n_{k_1}})
\end{equation}
holds by \eqref{e11.3}, \eqref{ettbu536}.
Now, using \eqref{onhwer12} and \eqref{ettbu537}, we get the inequality $d_{l_R}(a_1, a_2) \geq c$,
which contradicts \eqref{jtfdqw11}.

Thus there is a path $P_{k_0}$ satisfying inclusion \eqref{ettbu531} that implies
\begin{equation*}
|A| \leq |V(P_{k_0})| < \infty.
\end{equation*}

This completes the proof.

\end{proof}

The following proposition describes the labeled star graphs generating locally  finite ultrametric spaces.

\begin{proposition}\label{dfuj247}
Let $S = S(l)$ be a star graph with non-degenerate labeling $l_S \colon V(S) \to \mathbb{R}^{+}$. Then the following statements are equivalent:
\begin{enumerate}[label=\textit{(\roman*)}, left=0pt] 
\item The ultrametric space $(V(S), d_{l_S})$ is locally finite.

\item The set 
\begin{equation}\label{rvgujq}
W_\varepsilon := \{v \in V(S) \colon l_S(v) \leq \varepsilon\}
\end{equation}
is finite for every $\varepsilon > 0$.
\end{enumerate}
\end{proposition}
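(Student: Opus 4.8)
The plan is to exploit the very simple geometry of a star graph with centre $c$: the path joining two distinct leaves $u$, $v$ is $(u, c, v)$, so that formula \eqref{e11.3} gives $d_{l_S}(u, v) = \max\{l_S(u), l_S(c), l_S(v)\}$, while the path joining $c$ to a leaf $v$ is $(c, v)$, giving $d_{l_S}(c, v) = \max\{l_S(c), l_S(v)\}$. Both implications will be reduced to these two formulas, together with the elementary remark that in an ultrametric space the inclusion $A \subseteq B_r(x)$ forces $d_{l_S}(u, v) \leq r$ for all $u, v \in A$ (apply the strong triangle inequality through $x$).

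For $(i) \Rightarrow (ii)$, I would fix $\varepsilon > 0$ and put $\rho := \max\{l_S(c), \varepsilon\} > 0$. Every $v \in W_\varepsilon$ then satisfies $d_{l_S}(c, v) \leq \rho$: for a leaf $v$ this is $\max\{l_S(c), l_S(v)\} \leq \max\{l_S(c), \varepsilon\}$, and for $v = c$ it is trivial. Hence $W_\varepsilon \subseteq B_\rho(c)$, so $W_\varepsilon$ is bounded, and Definition~\ref{7jgreoo} yields that $W_\varepsilon$ is finite.

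For $(ii) \Rightarrow (i)$, I would take an arbitrary bounded $A \subseteq V(S)$ and show it is finite. If $|A| \leq 2$ there is nothing to prove, so assume $|A| \geq 3$; since at most one vertex of $A$ can be the centre, $A$ then contains at least two distinct leaves. Boundedness provides $r > 0$ and $x \in V(S)$ with $A \subseteq B_r(x)$, hence $d_{l_S}(u, v) \leq r$ for all $u, v \in A$. For any leaf $v \in A$, choosing a second leaf $v_0 \in A$ with $v_0 \neq v$ and applying the two-leaf formula gives $l_S(v) \leq d_{l_S}(v, v_0) \leq r$; thus every leaf of $A$ lies in $W_r$, that is, $A \setminus \{c\} \subseteq W_r$. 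By $(ii)$ the set $W_r$ is finite, so $A \setminus \{c\}$ is finite and therefore so is $A$.

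I do not anticipate a genuine difficulty; the only points needing a little care are the degenerate cases $|V(S)| \leq 1$ and $|A| \leq 2$, which must be disposed of first so that ``choose two distinct leaves'' is legitimate, and keeping track of the single exceptional vertex $c$ — which is automatically covered (it belongs to every ball $B_\rho(c)$ in the first implication, and it is precisely the at-most-one vertex of $A$ that need not lie in $W_r$ in the second). All remaining steps are direct substitutions into \eqref{e11.3}.
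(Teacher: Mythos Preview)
Your proof is correct and follows essentially the same approach as the paper: both directions rest on the explicit star-graph distance formula $d_{l_S}(u,v)=\max\{l_S(u),l_S(c),l_S(v)\}$, from which one sees that $W_\varepsilon$ sits inside a ball of radius $\max\{l_S(c),\varepsilon\}$ and that any bounded set is contained (up to the centre) in some $W_r$. The only cosmetic differences are that the paper argues $(i)\Rightarrow(ii)$ by contradiction via the hull $H_{W_{\varepsilon_0}}$ rather than directly, and in $(ii)\Rightarrow(i)$ it applies the formula uniformly to obtain $A\subseteq W_{\varepsilon_1}$ without singling out $c$.
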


\begin{proof}

$(i)$ $\Rightarrow$ $(ii)$. Suppose, for contradiction, there exists $\varepsilon_0 > 0$ such that the set $W_{\varepsilon_0}$ given in~\eqref{rvgujq} is infinite.

Write $H_{W_{\varepsilon_0}}$ for the hull of the set $W_{\varepsilon_0}$ and denote by $c$ the center of the star graph $S$.
Then $H_{W_{\varepsilon_0}}$ is a subtree of the star graph $S$. Since every connected subgraph of $S$ is also a star graph with the same center, $H_{W_{\varepsilon_0}}$ is a star graph with the center $c$.

The vertex set of $H_{W_{\varepsilon_0}}$ is the set $W_{\varepsilon_0} \cup \{c\}$,
\begin{equation}\label{kkyrew62}
V(H_{W_{\varepsilon_0}}) = W_{\varepsilon_0} \cup \{c\}.
\end{equation}

Equality \eqref{rvgujq} with $\varepsilon = \varepsilon_0$ and equality \eqref{kkyrew62}  give us the inequality
\begin{equation*}
l_S(v) \leq \max\{\varepsilon_0, l_S(c)\}
\end{equation*}
for every $v \in V(H_{W_{\varepsilon_0}})$.
The last inequality and equality \eqref{e11.3} with $T = S$ and $l = l_S$ show that $V(H_{W_{\varepsilon_0}})$ is a bounded subset of the ultrametric space $(V(S), d_{l_S})$.
Since $(V(S), d_{l_S})$ is locally bounded, the boundedness of the set 
$V(H_{W_{\varepsilon_0}})$ implies the finiteness of this set. Consequently, the set $W_{\varepsilon_0}$ is also finite, which contradicts the definition of the number $\varepsilon_0$. 

Thus, the set $W_\varepsilon$ is finite for each $\varepsilon>0$.

$(ii)$ $\Rightarrow$ $(i)$. Let $A$ be an arbitrary bounded subset of the ultrametric space $(V(S), d_{l_S})$. We must prove the inequality
\begin{equation}\label{qswssej1}
|A| < \infty.
\end{equation}

Since $A$ is bounded, there is a positive number $\varepsilon_1$ such that
\begin{equation}\label{kuhgs}
d_{l_S}(u, v) \leq \varepsilon_1 < \infty 
\end{equation}
for all $u, v \in A.$
Using \eqref{e11.3}
and the definition of the star graphs we can prove the equality
\begin{equation}\label{olidhh}
d_{l_S}(u,v) = \max\{l_S(u), l_S(c), l_S(v)\} 
\end{equation}
for all different $u,v \in V(S)$, where $c$ is the center of the star graph $S$. Now \eqref{kuhgs} and \eqref{olidhh} imply the inequality
\begin{equation*}
l_S(v) \leq \varepsilon_1 
\end{equation*}
for each $v \in A$.
The last inequality and formula \eqref{rvgujq} show that the inclusion
\begin{equation}\label{aaa33}
A \subseteq W_{\varepsilon_1} 
\end{equation}
holds. By statement $(ii)$, the set $W_{\varepsilon}$ is finite for each $\varepsilon > 0$.
Thus, \eqref{qswssej1} follows from \eqref{aaa33}.

This completes the proof.

\end{proof}

The next theorem is an analog of König's Infinity Lemma for locally finite ultrametric spaces $(V(T), d_l)$.

\begin{theorem}\label{dster}
Let $T = T(l)$ be a labeled tree with non-degenerate labeling $l \colon V(T) \to \mathbb{R}^+$. Then the following statements are equivalent:

\begin{enumerate}[label=\textit{(\roman*)}, left=0pt] 
\item The ultrametric space $(V(T), d_l)$ is locally finite.

\item The ultrametric spaces $(V(R), d_l|_{V(R) \times V(R)})$ and $(V(S), d_l|_{V(S) \times V(S)})$ are locally finite for all rays $R \subseteq T$ and all star graphs $S \subseteq T$.
\end{enumerate}
\end{theorem}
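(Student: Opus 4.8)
The implication $(i) \Rightarrow (ii)$ is immediate from Lemma~\ref{ettpaz86}: a ray $R \subseteq T$ and a star graph $S \subseteq T$ have vertex sets that are subsets of $V(T)$, so if $(V(T), d_l)$ is locally finite then so are the induced subspaces. The substance of the theorem is the converse. The plan for $(ii) \Rightarrow (i)$ is to argue by contradiction: suppose $(V(T), d_l)$ is not locally finite, so there is a bounded infinite set $A \subseteq V(T)$, say $d_l(u,v) \leq r$ for all $u, v \in A$ with some fixed $u_0 \in A$. By Lemma~\ref{ljhg35}, the hull $H_A$ has bounded vertex set, and since $A$ is infinite, $V(H_A)$ is an infinite connected subtree of $T$ all of whose vertices lie in the ball $B_r(u_0)$. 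So it suffices to produce, inside this infinite bounded subtree, either a ray or a star graph whose induced subspace is not locally finite — that contradicts $(ii)$, because an induced subspace of a bounded set is bounded, hence any infinite such subspace is not locally finite.

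The key tool is Proposition~\ref{p3.2} applied to the infinite connected graph $H_A$: it either contains a ray or has a vertex of infinite degree. In the first case, let $R = (v_1, v_2, \ldots) \subseteq H_A$ be a ray. All its vertices lie in $B_r(u_0)$, so $(V(R), d_l|_{V(R)\times V(R)})$ is an infinite bounded ultrametric space, hence not locally finite, contradicting $(ii)$. In the second case, let $v^*$ be a vertex of $H_A$ with $\delta_{H_A}(v^*) = \infty$, and let $S$ be the star graph with center $v^*$ and leaf set $N_{H_A}(v^*)$; this is a subgraph of $H_A \subseteq T$, hence a star graph $S \subseteq T$. Again $V(S) \subseteq V(H_A) \subseteq B_r(u_0)$, so $V(S)$ is infinite and bounded in the induced metric, so $(V(S), d_l|_{V(S)\times V(S)})$ is not locally finite, contradicting $(ii)$. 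Either way we reach a contradiction, so $(V(T), d_l)$ must be locally finite.

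The one point requiring a little care — and the place where a careless argument could slip — is the passage ``infinite connected graph $\Rightarrow$ infinite vertex set so that Proposition~\ref{p3.2} applies.'' Here I must confirm that $V(H_A)$ is genuinely infinite: this follows because $A \subseteq V(H_A)$ and $A$ is infinite by assumption. I should also note explicitly that a star graph or ray that is a subgraph of $T$ inherits the ambient labeling by restriction, so the induced ultrametric in statement $(ii)$ is exactly $d_l$ restricted to the relevant vertex set — there is no ambiguity about ``which ultrametric'' on the subgraph. No deep obstacle arises; the proof is essentially an application of König's lemma (Proposition~\ref{p3.2}) combined with the hull-boundedness Lemma~\ref{ljhg35}. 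If one wanted to avoid Proposition~\ref{p3.2} one could instead invoke Propositions~\ref{52dcij} and~\ref{dfuj247} directly to describe which rays and stars fail local finiteness, but routing through Proposition~\ref{p3.2} is cleaner.
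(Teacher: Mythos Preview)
Your proof is correct and follows essentially the same skeleton as the paper's: Lemma~\ref{ettpaz86} for $(i)\Rightarrow(ii)$, and for $(ii)\Rightarrow(i)$ pass to the hull $H_A$ of a bounded infinite set, use Lemma~\ref{ljhg35} to see $V(H_A)$ is bounded, then apply Proposition~\ref{p3.2} to extract a ray or an infinite star inside $H_A$ and reach a contradiction.

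The one noteworthy difference is how the contradiction is obtained. You argue directly: the ray (or star) sits inside the bounded set $V(H_A)$, so its vertex set is itself bounded and infinite, hence the induced subspace is not locally finite, contradicting $(ii)$. The paper instead invokes $(ii)$ first to conclude the ray (or star) subspace \emph{is} locally finite, and then appeals to Propositions~\ref{52dcij} and~\ref{dfuj247} to deduce that $V(R)$ (resp.\ $V(S)$) must be unbounded in $(V(T),d_l)$, contradicting $V(R)\subseteq V(H_A)$. Your route is shorter and avoids those two propositions entirely; the paper's route has the virtue of exercising the ray/star characterizations just established, but logically your shortcut is cleaner. (Your closing remark that one could ``avoid Proposition~\ref{p3.2}'' via Propositions~\ref{52dcij} and~\ref{dfuj247} is slightly off---Proposition~\ref{p3.2} is what produces the ray or star in the first place, and the paper uses it too; the real alternative is whether one \emph{additionally} invokes \ref{52dcij}/\ref{dfuj247}, as the paper does, or not, as you do.)
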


\begin{proof}

$(i)$ $\Rightarrow$ $(ii)$. The truth of this implication follows from Lemma~\ref{ettpaz86}.

$(ii)$ $\Rightarrow$ $(i)$. Let $A$ be a nonempty bounded subset of $V(T)$.
We must prove that $A$ is a finite subset of $V(T)$.

Let $H_A$ be the hull of $A$. Proposition~\ref{p3.7-1} implies that $A$ is a finite set if and only if $H_A$ is a finite graph. 

By Proposition~\ref{p3.2} the graph $H_A$ is infinite if and only we have
\begin{equation}\label{tau1}
R \subseteq H_A 
\end{equation}
for some ray $R$, or 
\begin{equation}\label{tau2}
S \subseteq H_A
\end{equation}
for some  infinite star graph $S$.

Suppose first that \eqref{tau1} holds. Then the ultrametric space 
$
(V(R), d_l|_{V(R) \times V(R)})
$
is locally finite by statement $(ii)$. Hence $V(R)$ is unbounded subset of $(V(T), d_l)$ by Proposition~\ref{52dcij}.

Furthermore, since $A$ is bounded in $(V(T), d_l)$, the set $V(H_A)$ is also bounded in $(V(T), d_l)$ by Lemma~\ref{ljhg35}. Thus, we have obtained a contradiction: the unbounded set $V(R)$ is a subset of the bounded set $V(H_A)$. 

Therefore, there are no rays $R$ satisfying inclusion \eqref{tau1}.

By arguing in a similar way and using Proposition~\ref{dfuj247} instead of Proposition~\ref{52dcij}, one can show that inclusion \eqref{tau2} is also impossible for infinite star graphs $S$.

Thus, $H_A$ is a finite graph.

This completes the proof.
\end{proof}

\begin{example}
 Let $T$ be the tree containing the ray $R = (v_1, v_2, \ldots, v_n, \ldots)$ with labeling $l_R \colon V(T) \to \mathbb{R}^+$ satisfying $l_R(v_n) = n$ for each $n \in \mathbb{N}$ (see Figure~\ref{fig9-11}).
If $l \colon V(T) \to \mathbb{R}^+$ is a non-degenerate labeling such that $l_R$ is a restriction of $l$ on the set $V(R)$, then the ultrametric space $(V(T), d_l)$ is locally finite by Proposition~\ref{52dcij} and Theorem~\ref{dster}.

\end{example}

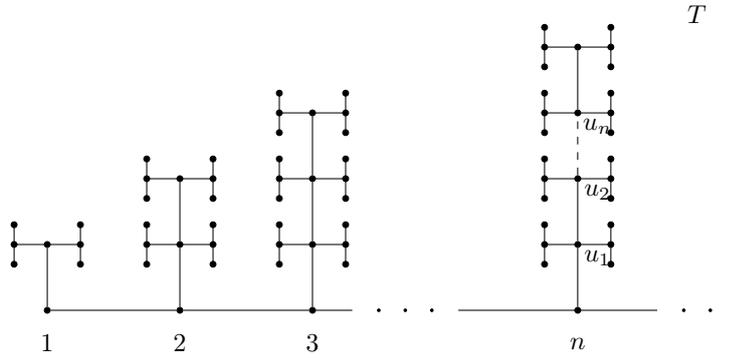
\begin{figure}[!ht]
\centering
\resizebox{0.75\textwidth}{!}{
\begin{tikzpicture}[scale=0.9, every node/.style={circle, draw, fill=black, inner sep=0.8pt}]

\node (v11) at (0,0) {};
\node (v12) at (0,1) {};
\draw (v11) -- (v12);
\draw (-0.5,1) -- (0.5,1);
\draw (-0.5,1.3) -- (-0.5,0.7);
\draw (0.5,1.3) -- (0.5,0.7);
\node at (-0.5,1.3) {};
\node at (-0.5,0.7) {};
\node at (0.5,1.3) {};
\node at (0.5,0.7) {};
\node at (-0.5,1) {};
\node at (0.5,1) {};
\node[draw=none, fill=none] at (0,-0.5) {1};

\node (v21) at (2,0) {};
\node (v22) at (2,1) {};
\node (v23) at (2,2) {};
\draw (v21) -- (v22) -- (v23);
\foreach \y in {1,2} {
  \draw (1.5,\y) -- (2.5,\y);
  \draw (1.5,\y+0.3) -- (1.5,\y-0.3);
  \draw (2.5,\y+0.3) -- (2.5,\y-0.3);
  \node at (1.5,\y+0.3) {};
  \node at (1.5,\y-0.3) {};
  \node at (2.5,\y+0.3) {};
  \node at (2.5,\y-0.3) {};
  \node at (1.5,\y) {};
  \node at (2.5,\y) {};
}
\node[draw=none, fill=none] at (2,-0.5) {2};

\node (v31) at (4,0) {};
\node (v32) at (4,1) {};
\node (v33) at (4,2) {};
\node (v34) at (4,3) {};
\draw (v31) -- (v32) -- (v33) -- (v34);
\foreach \y in {1,2,3} {
  \draw (3.5,\y) -- (4.5,\y);
  \draw (3.5,\y+0.3) -- (3.5,\y-0.3);
  \draw (4.5,\y+0.3) -- (4.5,\y-0.3);
  \node at (3.5,\y+0.3) {};
  \node at (3.5,\y-0.3) {};
  \node at (4.5,\y+0.3) {};
  \node at (4.5,\y-0.3) {};
  \node at (3.5,\y) {};
  \node at (4.5,\y) {};
}
\node[draw=none, fill=none] at (4,-0.5) {3};

\draw (v11) -- (v21) -- (v31);
\draw (4,0) -- (4.6,0); 
\foreach \x in {5.0,5.4,5.8} {
  \node[fill=black,inner sep=0.3pt] at (\x,0) {};
}
\draw (6.2,0) -- (8,0); %

\node (vn0) at (8,0) {};
\node (vn1) at (8,1) {};
\node (vn2) at (8,2) {};
\node (vn3) at (8,3) {};
\draw[dashed] (vn2) -- (vn3);
\node (vn4) at (8,4) {};
\draw (vn0) -- (vn1) -- (vn2);
\draw (vn3) -- (vn4);

\foreach \y in {1,2,3,4} {
  \draw (7.5,\y) -- (8.5,\y);
  \draw (7.5,\y+0.3) -- (7.5,\y-0.3);
  \draw (8.5,\y+0.3) -- (8.5,\y-0.3);
  \node at (7.5,\y+0.3) {};
  \node at (7.5,\y-0.3) {};
  \node at (8.5,\y+0.3) {};
  \node at (8.5,\y-0.3) {};
  \node at (7.5,\y) {};
  \node at (8.5,\y) {};
}

\node[draw=none, fill=none] at (8,-0.5) {$n$};
\node[draw=none, fill=none] at (8.3,0.8) {$u_1$};
\node[draw=none, fill=none] at (8.3,1.8) {$u_2$};
\node[draw=none, fill=none] at (8.3,2.8) {$u_n$};
\node[draw=none, fill=none] at (9.8,4.5) {$T$};

\draw (8,0) -- (9.2,0); %
\foreach \x in {9.6,10.0,10.4} {
  \node[fill=black,inner sep=0.3pt] at (\x,0) {};
}

\end{tikzpicture}
}
\caption{The tree $T$ does not contain any vertex of infinite degree and the symmetric difference 
of vertex sets $V(R_1)$, $V(R_2)$ is
finite for any two rays $R_1 \subseteq T$, $R_2 \subseteq T$.}
\label{fig9-11}
\end{figure}

\begin{example}\label{lkfa}
Let $T$ be the tree containing the vertex $v$ with $\deg_T(c) = \aleph_0$,  let $S$ be the star graph induced in $T$ by $c$ and all vertices adjacent to $c$,
and let
$
l_S \colon V(S) \to \mathbb{R}^+ 
$
be an injective labeling such that $l_S(V(S)) = \mathbb{N}$ (see Figure~\ref{fig10-11}).
If $l \colon V(T) \to \mathbb{R}^+$ is a non-degenerate labeling such that $l_S$ is a restriction of $l$ on the set $V(S)$, then the ultrametric space $(V(T), d_l)$ is locally finite by Proposition~\ref{dfuj247} and Theorem~\ref{dster}.
\end{example}

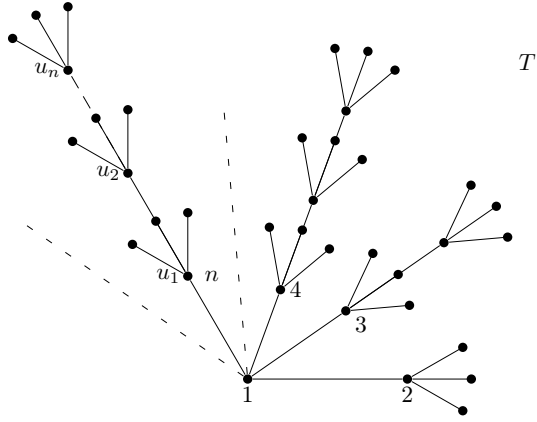
\begin{figure}[!ht]
\centering
\resizebox{0.55\textwidth}{!}{
\begin{tikzpicture}[
    scale=1.2,
    every node/.style={
        circle,
        fill=black,
        minimum size=4pt,
        inner sep=0pt
    }
]

\def\delta{30}

\node (1) at (0,0) [label=below:1] {};

\def\angleA{0}
\coordinate (dir2) at ({cos(\angleA)}, {sin(\angleA)});
\coordinate (leaf2a1) at ({cos(\angleA + \delta)}, {sin(\angleA + \delta)});
\coordinate (leaf2a2) at ({cos(\angleA - \delta)}, {sin(\angleA - \delta)});
\node (2a) at ($(1) + 2*(dir2)$) [label=below:2] {};
\draw (1) -- (2a);
\draw (2a) -- ($(2a) + 0.8*(dir2)$) node {};
\draw (2a) -- ($(2a) + 0.8*(leaf2a1)$) node {};
\draw (2a) -- ($(2a) + 0.8*(leaf2a2)$) node {};

\def\angleB{35}
\coordinate (dir3) at ({cos(\angleB)}, {sin(\angleB)});
\coordinate (leaf3a1) at ({cos(\angleB + \delta)}, {sin(\angleB + \delta)});
\coordinate (leaf3a2) at ({cos(\angleB - \delta)}, {sin(\angleB - \delta)});
\node (3a) at ($(1) + 1.5*(dir3)$) [label={[yshift=-2mm]right:3}] {};
\node (3b) at ($(1) + 3*(dir3)$) {};
\draw (1) -- (3a) -- (3b);

\draw (3a) -- ($(3a) + 0.8*(dir3)$) node {};
\draw (3a) -- ($(3a) + 0.8*(leaf3a1)$) node {};
\draw (3a) -- ($(3a) + 0.8*(leaf3a2)$) node {};

\draw (3b) -- ($(3b) + 0.8*(dir3)$) node {};
\draw (3b) -- ($(3b) + 0.8*(leaf3a1)$) node {};
\draw (3b) -- ($(3b) + 0.8*(leaf3a2)$) node {};

\def\angleC{70}
\coordinate (dir4) at ({cos(\angleC)}, {sin(\angleC)});
\coordinate (leaf4a1) at ({cos(\angleC + \delta)}, {sin(\angleC + \delta)});
\coordinate (leaf4a2) at ({cos(\angleC - \delta)}, {sin(\angleC - \delta)});
\node (4a) at ($(1) + 1.2*(dir4)$) [label=right:4] {};
\node (4b) at ($(1) + 2.4*(dir4)$) {};
\node (4c) at ($(1) + 3.6*(dir4)$) {};
\draw (1) -- (4a) -- (4b) -- (4c);

\draw (4a) -- ($(4a) + 0.8*(dir4)$) node {};
\draw (4a) -- ($(4a) + 0.8*(leaf4a1)$) node {};
\draw (4a) -- ($(4a) + 0.8*(leaf4a2)$) node {};

\draw (4b) -- ($(4b) + 0.8*(dir4)$) node {};
\draw (4b) -- ($(4b) + 0.8*(leaf4a1)$) node {};
\draw (4b) -- ($(4b) + 0.8*(leaf4a2)$) node {};

\draw (4c) -- ($(4c) + 0.8*(dir4)$) node {};
\draw (4c) -- ($(4c) + 0.8*(leaf4a1)$) node {};
\draw (4c) -- ($(4c) + 0.8*(leaf4a2)$) node {};

\def\angleD{95}
\coordinate (dirD) at ({cos(\angleD)}, {sin(\angleD)});
\draw[dashed, dash pattern=on 3pt off 7pt] (1) -- ($(1) + 3.5*(dirD)$);

\def\angleN{120}
\coordinate (dirn) at ({cos(\angleN)}, {sin(\angleN)});
\coordinate (leafna1) at ({cos(\angleN + \delta)}, {sin(\angleN + \delta)});
\coordinate (leafna2) at ({cos(\angleN - \delta)}, {sin(\angleN - \delta)});
\node (u1) at ($(1) + 1.5*(dirn)$) [label=left:$u_1$] {};
\node at ($(u1) + (0.3,0)$) [draw=none, fill=none] { $n$}; 
\node (u2) at ($(1) + 3*(dirn)$) [label=left:$u_2$] {};
\node (un) at ($(1) + 4.5*(dirn)$) [label=left:$u_n$] {};
\draw (1) -- (u1) -- (u2);
\draw[dashed, dash pattern=on 6pt off 4pt] (u2) -- (un); %

\foreach \u in {u1,u2,un} {
    \draw (\u) -- ($(\u) + 0.8*(dirn)$) node {};
    \draw (\u) -- ($(\u) + 0.8*(leafna1)$) node {};
    \draw (\u) -- ($(\u) + 0.8*(leafna2)$) node {};
}

\def\angleE{145}
\coordinate (dirE) at ({cos(\angleE)}, {sin(\angleE)});
\draw[dashed, dash pattern=on 3pt off 7pt] (1) -- ($(1) + 3.5*(dirE)$);

\node[draw=none, fill=none] at (3.5,4) {$T$};

\end{tikzpicture}
}
\caption{The tree 
$T$ is rayless and contains exactly one vertex of infinite degree.}
\label{fig10-11}
\end{figure}

The next theorem describes the trees $T$ which admit labelings 
$l \colon V(T) \to \mathbb{R}^+$ generating locally finite ultrametric spaces $(V(T), d_l)$.

\begin{theorem}\label{trees11}
Let $T$ be a tree. Then the following statements are equivalent:
\begin{enumerate}[label=\textit{(\roman*)}, left=0pt]
    \item The vertex set of $T$ is countable.

\item There is a non-degenerate labeling $l \colon V(T) \to \mathbb{R}^+$ for which the ultrametric space $(V(T), d_l)$ is locally finite.

\end{enumerate}
\end{theorem}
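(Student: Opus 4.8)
The plan is to prove the two implications separately. The implication $(ii)\Rightarrow(i)$ is essentially free: it holds for \emph{any} locally finite metric space. Indeed, if $(V(T),d_l)$ is locally finite (here $d_l$ is a metric since $l$ is non-degenerate, by Theorem~\ref{t11.9}), fix any $v^{*}\in V(T)$; because $d_l$ is real-valued, every vertex lies in some ball $B_n(v^{*})$ with $n\in\mathbb N$, so $V(T)=\bigcup_{n\in\mathbb N}B_n(v^{*})$. Each $B_n(v^{*})$ is bounded, hence finite by Definition~\ref{7jgreoo}, and therefore $V(T)$ is a countable union of finite sets, giving $|V(T)|\le\aleph_0$.

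For $(i)\Rightarrow(ii)$ I would argue by an explicit construction. If $V(T)$ is finite, any non-degenerate labeling works (e.g.\ $l\equiv 1$), because every finite metric space is locally finite. If $V(T)$ is countably infinite, enumerate $V(T)=\{v_n\colon n\in\mathbb N\}$ without repetitions and set $l(v_n)=n$; this $l$ is non-degenerate since $l(v)\ge 1>0$ for every $v\in V(T)$.

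It then remains to verify that $(V(T),d_l)$ is locally finite. A direct argument suffices: given a bounded $A\subseteq V(T)$, choose $B_r(v_j)$ with $A\subseteq B_r(v_j)$; for each $v_k\in A$ the path joining $v_j$ and $v_k$ in $T$ contains $v_k$, so $d_l(v_j,v_k)\ge l(v_k)=k$ by \eqref{e11.3}, which forces $k\le r$ (when $v_k\neq v_j$). Hence $A\subseteq\{v_j\}\cup\{v_k\colon k\le r\}$, a finite set. Alternatively, one can invoke Theorem~\ref{dster}: every ray $R=(w_1,w_2,\ldots)\subseteq T$ carries the pairwise distinct positive integers $l(w_1),l(w_2),\ldots$, so $\limsup_k l(w_k)=\infty$ and $(V(R),d_l|_{V(R)\times V(R)})$ is locally finite by Proposition~\ref{52dcij}; and for every star graph $S\subseteq T$ the set $W_\varepsilon=\{v\in V(S)\colon l(v)\le\varepsilon\}$ is contained in $\{v_n\colon n\le\varepsilon\}$, hence finite, so $(V(S),d_l|_{V(S)\times V(S)})$ is locally finite by Proposition~\ref{dfuj247}; Theorem~\ref{dster} then delivers local finiteness of $(V(T),d_l)$.

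I do not anticipate a genuine obstacle: the labeling $l(v_n)=n$ is the natural candidate, and either verification route is short. The minor points that need care are the use of real-valuedness of $d_l$ in $(ii)\Rightarrow(i)$ (so that each vertex sits in a ball of integer radius about $v^{*}$), the remark that restrictions of $l$ to rays and star graphs inherit the needed finiteness properties if one goes through Theorem~\ref{dster}, and the routine check that the chosen $l$ is non-degenerate.
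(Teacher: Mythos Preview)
Your proposal is correct, and the route via Theorem~\ref{dster} together with Propositions~\ref{52dcij} and~\ref{dfuj247} is exactly the paper's argument; the $(ii)\Rightarrow(i)$ direction is likewise identical to the paper's.

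Your direct verification for $(i)\Rightarrow(ii)$, however, is a genuine simplification over the paper. The paper constructs the same labeling $l(v_n)=n$ but then appeals to the K\"onig-type decomposition (Theorem~\ref{dster}) to reduce to rays and star graphs, checking each via the dedicated propositions. Your direct bound $d_l(v_j,v_k)\ge l(v_k)=k$ for $v_k\neq v_j$ dispatches local finiteness in one line and makes the whole argument self-contained, independent of Theorem~\ref{dster}. The paper's detour has the expository value of illustrating its own structural theorem, but your direct route is both shorter and more transparent.
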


\begin{proof}

$(i) \Rightarrow (ii)$. 
If $T$ is a finite tree, then for every non-degenerate $l\colon V(T) \to \mathbb{R}^+$ the ultrametric space $(V(T), d_l)$ 
is finite and, consequently, locally finite.

Suppose that $|V(T)| = \aleph_0$ holds. Then there is a bijection 
$
f\colon V(T) \to \mathbb{N}
$
and, consequently, we can consider a labeling $l\colon  V(T) \to \mathbb{R}^+$ such that 
\begin{equation*}
l(v) = f(v)
\end{equation*}
for each $v \in V(T)$. Then, using Proposition~\ref{52dcij}, it is easy to prove that, for every ray $R \subseteq T$, the ultrametric space $(V(R), d_l|_{V(R) \times V(R)})$ is a locally finite subspace of $(V(T), d_l)$.

Analogously, Proposition~\ref{dfuj247} implies that $(V(S), d_l|_{V(S) \times V(S)})$ is locally finite for every infinite star graph $S \subseteq T$. 

Hence $(V(T), d_l)$ is locally finite by Theorem~\ref{dster}.

$(ii) \Rightarrow (i)$. 
To prove the inequality 
\begin{equation}\label{baas9}
|V(T)| \leq \aleph_0,
\end{equation}
fix a point $v \in V(T)$ and represent $V(T)$ as the union
\begin{equation*}
V(T) = \bigcup_{n=1}^{\infty} B_{r_n}(v),
\end{equation*}
where $B_{r_n}(v)$ is the closed ball with the radius $r_n = n$ and center $v$,
\begin{equation*}
B_{r_n}(v) := \{u \in V(T) \mid d_l(u,v) \leq n \}
\end{equation*}
 for every $n \in \mathbb{N}$. Since all $B_{r_n}$ are bounded and $(V(T), d_l)$ is locally finite, $V(T)$ is countable as a countable union of finite sets.

This completes the proof.

\end{proof}

Theorems \ref{ggewj77} and \ref{trees11} give us the following corollary.

\begin{corollary}
 Let $T$ be a tree. Then the following statements are equivalent:

 \begin{enumerate}[label=\textit{(\roman*)}, left=0pt]
\item The ultrametric space $(V(T), d_l)$ is separable for every non-degenerate labeling $l \colon V(T) \to \mathbb{R}^+$.

\item There is a non-degenerate labeling $l \colon V(T) \to \mathbb{R}^+$ for which the ultrametric space $(V(T), d_l)$ is locally finite.

\item There is a non-degenerate labeling $l \colon V(T) \to \mathbb{R}^+$ for which the ultrametric space $(V(T), d_l)$ is separable.

\end{enumerate}

\end{corollary}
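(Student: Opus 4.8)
The plan is to route all three statements through the single condition that the vertex set $V(T)$ is countable, so that no new argument is needed beyond invoking Theorems~\ref{ggewj77} and~\ref{trees11}.

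First I would observe that statement $(i)$ of the corollary is verbatim statement $(ii)$ of Theorem~\ref{ggewj77}, and statement $(iii)$ of the corollary is verbatim statement $(iii)$ of Theorem~\ref{ggewj77}. Hence, by the equivalence $(i)\Leftrightarrow(ii)\Leftrightarrow(iii)$ established in Theorem~\ref{ggewj77}, each of these two statements is equivalent to the countability of $V(T)$, that is, to statement $(i)$ of Theorem~\ref{ggewj77}. Next, statement $(ii)$ of the corollary is exactly statement $(ii)$ of Theorem~\ref{trees11}, which by that theorem is equivalent to statement $(i)$ of Theorem~\ref{trees11}, namely again the inequality $|V(T)|\le\aleph_0$.

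Having identified the countability of $V(T)$ as the common pivot, I would close the argument by the immediate chain
\[
(i)\ \Longleftrightarrow\ |V(T)|\le\aleph_0\ \Longleftrightarrow\ (ii)\ \Longleftrightarrow\ |V(T)|\le\aleph_0\ \Longleftrightarrow\ (iii),
\]
where the first and last equivalences come from Theorem~\ref{ggewj77} and the middle one from Theorem~\ref{trees11}. Since the corollary carries no content beyond these two theorems, the only point requiring care is bookkeeping: one must match the roman numerals of the corollary to the correct roman numerals of the two cited theorems, as both theorems reuse the labels $(i)$, $(ii)$, $(iii)$ for different (though related) assertions. I do not anticipate any genuine obstacle.
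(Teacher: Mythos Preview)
Your proposal is correct and is exactly the approach the paper intends: the corollary is stated immediately after the line ``Theorems~\ref{ggewj77} and~\ref{trees11} give us the following corollary'' with no further proof, so routing all three statements through the countability of $V(T)$ via those two theorems is precisely what is meant. Your matching of the roman numerals is accurate, and there is nothing more to add.
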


\begin{example}
 Let $l_R \colon V(R) \to \mathbb{R}^+$
be a non-degenerate labeling on a ray $R = (v_1, v_2, \ldots, v_n, \ldots)$
such that
\begin{equation*}
\lim_{n \to \infty} l_R(v_n) = 0.
\end{equation*}

Then $(V(R), d_{l_R})$ is separable, by Theorem~\ref{ggewj77}, but not locally finite, by Proposition~\ref{52dcij}.
\end{example}


\subsection*{Funding}

First author was supported by grant 359772 of the Academy of Finland.

\subsection*{Conflict of interest}

The authors declare no conflict of interest.






\bibliography{bib2020.07}


\begin{thebibliography}{22}
\ifx \bisbn   \undefined \def \bisbn  #1{ISBN #1}\fi
\ifx \binits  \undefined \def \binits#1{#1}\fi
\ifx \bauthor  \undefined \def \bauthor#1{#1}\fi
\ifx \batitle  \undefined \def \batitle#1{#1}\fi
\ifx \bjtitle  \undefined \def \bjtitle#1{#1}\fi
\ifx \bvolume  \undefined \def \bvolume#1{\textbf{#1}}\fi
\ifx \byear  \undefined \def \byear#1{#1}\fi
\ifx \bissue  \undefined \def \bissue#1{#1}\fi
\ifx \bfpage  \undefined \def \bfpage#1{#1}\fi
\ifx \blpage  \undefined \def \blpage #1{#1}\fi
\ifx \burl  \undefined \def \burl#1{\textsf{#1}}\fi
\ifx \doiurl  \undefined \def \doiurl#1{\url{https://doi.org/#1}}\fi
\ifx \betal  \undefined \def \betal{\textit{et al.}}\fi
\ifx \binstitute  \undefined \def \binstitute#1{#1}\fi
\ifx \binstitutionaled  \undefined \def \binstitutionaled#1{#1}\fi
\ifx \bctitle  \undefined \def \bctitle#1{#1}\fi
\ifx \beditor  \undefined \def \beditor#1{#1}\fi
\ifx \bpublisher  \undefined \def \bpublisher#1{#1}\fi
\ifx \bbtitle  \undefined \def \bbtitle#1{#1}\fi
\ifx \bedition  \undefined \def \bedition#1{#1}\fi
\ifx \bseriesno  \undefined \def \bseriesno#1{#1}\fi
\ifx \blocation  \undefined \def \blocation#1{#1}\fi
\ifx \bsertitle  \undefined \def \bsertitle#1{#1}\fi
\ifx \bsnm \undefined \def \bsnm#1{#1}\fi
\ifx \bsuffix \undefined \def \bsuffix#1{#1}\fi
\ifx \bparticle \undefined \def \bparticle#1{#1}\fi
\ifx \barticle \undefined \def \barticle#1{#1}\fi
\bibcommenthead
\ifx \bconfdate \undefined \def \bconfdate #1{#1}\fi
\ifx \botherref \undefined \def \botherref #1{#1}\fi
\ifx \url \undefined \def \url#1{\textsf{#1}}\fi
\ifx \bchapter \undefined \def \bchapter#1{#1}\fi
\ifx \bbook \undefined \def \bbook#1{#1}\fi
\ifx \bcomment \undefined \def \bcomment#1{#1}\fi
\ifx \oauthor \undefined \def \oauthor#1{#1}\fi
\ifx \citeauthoryear \undefined \def \citeauthoryear#1{#1}\fi
\ifx \endbibitem  \undefined \def \endbibitem {}\fi
\ifx \bconflocation  \undefined \def \bconflocation#1{#1}\fi
\ifx \arxivurl  \undefined \def \arxivurl#1{\textsf{#1}}\fi
\csname PreBibitemsHook\endcsname

\bibitem[\protect\citeauthoryear{Berestrovskii}{2019}]{Ber2019SMJ}
\begin{barticle}
\bauthor{\bsnm{Berestrovskii}, \binits{V.N.}}:
\batitle{{On Urysohn's $\mathbb{R}$ tree}}.
\bjtitle{Siberian Mathematical Journal}
\bvolume{60}(\bissue{1}),
\bfpage{10}--\blpage{19}
(\byear{2019})
\end{barticle}
\endbibitem

\bibitem[\protect\citeauthoryear{Capraro}{2013}]{Capraro}
\begin{barticle}
\bauthor{\bsnm{Capraro}, \binits{V.}}:
\batitle{{Amenability, locally finite spaces, and bi-lipschitz embeddings}}.
\bjtitle{Expositiones Mathematicae}
\bvolume{31}(\bissue{4}),
\bfpage{334}--\blpage{349}
(\byear{2013})
\end{barticle}
\endbibitem

\bibitem[\protect\citeauthoryear{Deza and Deza}{2016}]{Dez-Dez}
\begin{bbook}
\bauthor{\bsnm{Deza}, \binits{M.M.}},
\bauthor{\bsnm{Deza}, \binits{E.}}:
\bbtitle{Encyclopedia of Distances},
\bedition{4th edition} edn.
\bpublisher{Springer},
\blocation{Berlin}
(\byear{2016})
\end{bbook}
\endbibitem

\bibitem[\protect\citeauthoryear{Diestel}{2017}]{Diestel2017}
\begin{bbook}
\bauthor{\bsnm{Diestel}, \binits{R.}}:
\bbtitle{Graph Theory},
\bedition{5}th edn.
\bsertitle{Graduate Texts in Mathematics},
vol. \bseriesno{173}.
\bpublisher{Springer},
\blocation{Berlin}
(\byear{2017})
\end{bbook}
\endbibitem

\bibitem[\protect\citeauthoryear{Dovgoshey and K\"{u}\c{c}\"{u}kaslan}{2022}]{DK2022LTGCCaDUS}
\begin{barticle}
\bauthor{\bsnm{Dovgoshey}, \binits{O.}},
\bauthor{\bsnm{K\"{u}\c{c}\"{u}kaslan}, \binits{M.}}:
\batitle{Labeled trees generating complete, compact, and discrete ultrametric spaces}.
\bjtitle{Annals of Combinatorics}
\bvolume{26},
\bfpage{613}--\blpage{642}
(\byear{2022})
\end{barticle}
\endbibitem

\bibitem[\protect\citeauthoryear{Dovgoshey and Kostikov}{2023}]{DK2024DLPSSAG}
\begin{barticle}
\bauthor{\bsnm{Dovgoshey}, \binits{O.}},
\bauthor{\bsnm{Kostikov}, \binits{A.}}:
\batitle{{Locally finite ultrametric spaces and labeled trees}}.
\bjtitle{J. Math. Sci.}
\bvolume{276}(\bissue{5}),
\bfpage{614}--\blpage{637}
(\byear{2023})
\end{barticle}
\endbibitem

\bibitem[\protect\citeauthoryear{Dovgoshey}{2020}]{Dov2020TaAoG}
\begin{botherref}
\oauthor{\bsnm{Dovgoshey}, \binits{O.}}:
Isomorphism of trees and isometry of ultrametric spaces.
Theory and Applications of Graphs
\textbf{7}(2)
(2020).
Article 3
\end{botherref}
\endbibitem

\bibitem[\protect\citeauthoryear{Dovgoshey}{2025}]{Dovg2025}
\begin{botherref}
\oauthor{\bsnm{Dovgoshey}, \binits{O.}}:
{Totally bounded ultrametric spaces and locally finite trees}.
arXiv:2502.04228
(2025)
\end{botherref}
\endbibitem

\bibitem[\protect\citeauthoryear{Dovgoshey and Petrov}{2020}]{DP2020pNUAA}
\begin{barticle}
\bauthor{\bsnm{Dovgoshey}, \binits{O.}},
\bauthor{\bsnm{Petrov}, \binits{E.}}:
\batitle{{On some extremal properties of finite ultrametric spaces}}.
\bjtitle{P-adic Numbers Ultrametr. Anal. Appl.}
\bvolume{12}(\bissue{1}),
\bfpage{1}--\blpage{11}
(\byear{2020})
\end{barticle}
\endbibitem

\bibitem[\protect\citeauthoryear{Dovgoshey et~al.}{2016}]{DPT2016}
\begin{botherref}
\oauthor{\bsnm{Dovgoshey}, \binits{O.}},
\oauthor{\bsnm{Petrov}, \binits{E.}},
\oauthor{\bsnm{Teichert}, \binits{H.-M.}}:
{Extremal properties and morphisms of finite ultrametric spaces and their representing trees}.
arXiv:1610.08282v2,
1--33
(2016)
\end{botherref}
\endbibitem

\bibitem[\protect\citeauthoryear{Dovgoshey et~al.}{2017}]{DPT2017FPTA}
\begin{barticle}
\bauthor{\bsnm{Dovgoshey}, \binits{O.}},
\bauthor{\bsnm{Petrov}, \binits{E.}},
\bauthor{\bsnm{Teichert}, \binits{H.-M.}}:
\batitle{{How rigid the finite ultrametric spaces can be?}}
\bjtitle{Fixed Point Theory Appl.}
\bvolume{19}(\bissue{2}),
\bfpage{1083}--\blpage{1102}
(\byear{2017})
\end{barticle}
\endbibitem

\bibitem[\protect\citeauthoryear{Dovgoshey and Rovenska}{2025}]{DR2025USGbLSG}
\begin{barticle}
\bauthor{\bsnm{Dovgoshey}, \binits{O.}},
\bauthor{\bsnm{Rovenska}, \binits{O.}}:
\batitle{Ultrametric spaces generated by labeled star graphs}.
\bjtitle{J. Math. Sci.}
\bvolume{228}(\bissue{2}),
\bfpage{182}--\blpage{198}
(\byear{2025})
\end{barticle}
\endbibitem

\bibitem[\protect\citeauthoryear{Dovgoshey and Vito}{2025}]{DV2025JMS}
\begin{barticle}
\bauthor{\bsnm{Dovgoshey}, \binits{O.}},
\bauthor{\bsnm{Vito}, \binits{V.}}:
\batitle{Totally bounded ultrametric spaces generated by labeled rays}.
\bjtitle{Applied General Topology}
\bvolume{26}(\bissue{1}),
\bfpage{163}--\blpage{182}
(\byear{2025})
\end{barticle}
\endbibitem

\bibitem[\protect\citeauthoryear{Gallian}{2024}]{Gallian2024}
\begin{botherref}
\oauthor{\bsnm{Gallian}, \binits{J.A.}}:
A dynamic survey of graph labeling.
The Electronic Journal of Combinatorics
(DS6)
(2024).
Dynamic Survey
\end{botherref}
\endbibitem

\bibitem[\protect\citeauthoryear{Gurvich and Vyalyi}{2012}]{GV}
\begin{barticle}
\bauthor{\bsnm{Gurvich}, \binits{V.}},
\bauthor{\bsnm{Vyalyi}, \binits{M.}}:
\batitle{{Ultrametrics, trees, and bottleneck arcs}}.
\bjtitle{Math. Ed., Moscow: MCNMO}
\bvolume{3}(\bissue{16}),
\bfpage{75}--\blpage{88}
(\byear{2012})
\end{barticle}
\endbibitem

\bibitem[\protect\citeauthoryear{Hughes}{2004}]{H04}
\begin{barticle}
\bauthor{\bsnm{Hughes}, \binits{B.}}:
\batitle{{Trees and ultrametric spaces: a categorical equivalence}}.
\bjtitle{Adv. Math.}
\bvolume{189}(\bissue{1}),
\bfpage{148}--\blpage{191}
(\byear{2004})
\end{barticle}
\endbibitem

\bibitem[\protect\citeauthoryear{Kainth}{2023}]{Surinder}
\begin{bbook}
\bauthor{\bsnm{Kainth}, \binits{S.P.S.}}:
\bbtitle{A Comprehensive Textbook on Metric Spaces}.
\bpublisher{Springer},
\blocation{Singapore}
(\byear{2023})
\end{bbook}
\endbibitem

\bibitem[\protect\citeauthoryear{König}{1927}]{Konig}
\begin{barticle}
\bauthor{\bsnm{König}, \binits{D.}}:
\batitle{{Über eine Schlussweise aus dem Endlichen ins Unendliche}}.
\bjtitle{Acta Sci. Math.(Szeged)}
\bvolume{3}(\bissue{2--3}),
\bfpage{121}--\blpage{130}
(\byear{1927})
\end{barticle}
\endbibitem

\bibitem[\protect\citeauthoryear{Lemin}{2003}]{Lem2003AU}
\begin{barticle}
\bauthor{\bsnm{Lemin}, \binits{A.J.}}:
\batitle{{The category of ultrametric spaces is isomorphic to the category of complete, atomic, tree-like, and real graduated lattices LAT*}}.
\bjtitle{Algebra Universalis}
\bvolume{50}(\bissue{1}),
\bfpage{35}--\blpage{49}
(\byear{2003})
\end{barticle}
\endbibitem

\bibitem[\protect\citeauthoryear{Ostrovska and Ostrovskii}{2019}]{Ost-Ost}
\begin{barticle}
\bauthor{\bsnm{Ostrovska}, \binits{S.}},
\bauthor{\bsnm{Ostrovskii}, \binits{M.I.}}:
\batitle{{On embeddings of locally finite metric spaces into $l_p$}}.
\bjtitle{Journal of Mathematical Analysis and Applications}
\bvolume{474}(\bissue{1}),
\bfpage{666}--\blpage{673}
(\byear{2019})
\end{barticle}
\endbibitem

\bibitem[\protect\citeauthoryear{Ostrovskii}{2012}]{Ost}
\begin{barticle}
\bauthor{\bsnm{Ostrovskii}, \binits{M.I.}}:
\batitle{{Embeddability of locally finite metric spaces into banach spaces is finitely determined}}.
\bjtitle{Proceedings of the American Mathematical Society}
\bvolume{140}(\bissue{8}),
\bfpage{2721}--\blpage{2730}
(\byear{2012})
\end{barticle}
\endbibitem

\bibitem[\protect\citeauthoryear{Petrov and Dovgoshey}{2014}]{PD2014JMS}
\begin{barticle}
\bauthor{\bsnm{Petrov}, \binits{E.}},
\bauthor{\bsnm{Dovgoshey}, \binits{A.}}:
\batitle{{On the {G}omory-{H}u inequality}}.
\bjtitle{J. Math. Sci.}
\bvolume{198}(\bissue{4}),
\bfpage{392}--\blpage{411}
(\byear{2014}).
\bcomment{Translation from Ukr. Mat. Visn. 10(4):469--496, 2013}
\end{barticle}
\endbibitem

\end{thebibliography}

\end{document}